\newtheorem{theorem}{Theorem}[section]
\newtheorem{lemma}[theorem]{Lemma}
\newtheorem{proposition}[theorem]{Proposition}
\newtheorem{corollary}[theorem]{Corollary}
\newtheorem{definition}[theorem]{Definition}
\theoremstyle{definition}
\newtheorem{example}[theorem]{Example}
\newtheorem{remark}[theorem]{Remark}
\begin{document}
	\title[Reflexive extended locally convex spaces]{Reflexive extended locally convex spaces}
	\author{Akshay Kumar \and Varun Jindal}
		\address{Akshay Kumar: Department of Mathematics, Malaviya National Institute of Technology Jaipur, Jaipur-302017, Rajasthan, India}
	\email{akshayjkm01@gmail.com}
	
	\address{Varun Jindal: Department of Mathematics, Malaviya National Institute of Technology Jaipur, Jaipur-302017, Rajasthan, India}
	\email{vjindal.maths@mnit.ac.in}

\subjclass[2010]{Primary  46A20, 46A25; Secondary 46A03, 46A17, 54C40}	
\keywords{Extended locally convex space, extended normed space, weak topolgy, weak$^*$ topology, strong topology, reflexive spaces}		
\maketitle

\begin{abstract}  
For an extended locally convex space (elcs) $(X,\tau)$, the authors in \cite{doelcs} studied the topology $\tau_{ucb}$ of uniform convergence on bounded subsets of $(X,\tau)$ on the dual $X^*$ of $(X,\tau)$. In the present paper, we use the topology $\tau_{ucb}$ to explore the reflexive property of extended locally convex spaces. It is shown that an elcs is (semi) reflexive if and only if any of its open subspaces is (semi) reflexive. For an extended normed space, we show that reflexivity is a three-space property.    		
	
%This paper aims to define and study reflexive extended locally convex spaces. Further, we relate the reflexivity of an extended locally convex space $(X, \tau)$ with the reflexivity of its finest space $(X, \tau_F)$, where $\tau_F$ is the finest locally convex topology on $X$ which is still coarser than $\tau$. We demonstrate that the reflexivity in an extended Banach space is a three space property. As an application to function spaces, we study the reflexivity property of some known function spaces.
 \end{abstract}	

\section{Introduction}
In classical functional analysis, our attention is directed towards the study of locally convex spaces. An important characterization of a locally convex space is that a collection of seminorms induces its topology. However, in various problems, we encounter functions that possess all the properties of a seminorm (or even a norm) but can also assume infinite value.      

An extended norm on a vector space $X$ is a function satisfying all the properties of a norm and, in addition, can also attain infinite value. A vector space $X$ together with an extended norm is called an extended normed linear space (enls). These spaces were first formally studied by Beer in \cite{nwiv} and further developed by Beer and Vanderwerff in \cite{socsiens, spoens}. 

Salas and Tapia-Garc{\'\i}a introduced the concept of an extended locally convex space in \cite{esaetvs}, which is a generalization of an extended normed linear space. These new extended spaces are different from the classical locally convex spaces as the scalar multiplication in these spaces may not be jointly continuous. As a result, the conventional theory of locally convex spaces may not be directly applicable to these new spaces. To address this problem, the finest locally convex topology (flc topology) for an (elcs) $(X, \tau)$ which is coarser than $\tau$ was studied in \cite{flctopology}. It was shown in \cite{flctopology} that if $\tau_F$ is the flc topology for an elcs $(X, \tau)$, then both $(X, \tau)$ and $(X, \tau_F)$ have the same dual $X^*$ (the collection of all continuous linear functionals). 

In \cite{doelcs}, the authors employed the flc topology to examine the dual of an elcs. Specifically, they studied the weak topology on an elcs $(X, \tau)$ and the weak$^*$ topology on the dual $X^*$ of $(X, \tau)$. Besides this, on $X^*$, they also studied the topology $\tau_{ucb}$ of uniform convergence on bounded subsets of $(X, \tau)$. 

In the present paper, we use the topology $\tau_{ucb}$ to study reflexive extended locally convex spaces. 

The paper is organized as follows: the second section presents all  essential preliminary results and definitions. In Section 3, we define and study reflexive extended locally convex spaces. More specifically, we relate the reflexivity of an elcs $(X, \tau)$ with the reflexivity of its finest space $(X, \tau_F)$, where $\tau_F$ is the corresponding flc topology. We also show that an elcs $(X, \tau)$ is reflexive if and only if any of its open subspaces is reflexive. Further, in the case of an enls, we prove that the reflexivity property is a three-space property.  As an application of our results, in the final section, we look at the reflexivity of some well known function spaces. %In particular, we show that a metric space $(X, d)$ is a discrete space whenever the collection $C(X)$ of all real-valued continuous function on $X$ endowed with the topology $\tau_{\mathcal{B}}^s$ (or $\tau_{\mathcal{B}}$) of strong uniform convergence (or uniform convergence) on the elements of a bornology $\mathcal{B}$ is reflexive.   

%For an elcs, the topology $\tau_{ucb}$ on the dual $X^*$, of uniform convergence on bounded subsets of $(X, \tau)$ is defined and studied in \cite{spoens, doelcs}. In this paper, we further use the topology $\tau_{ucb}$ to define the reflexivity in the case of an extended locally convex space $(X, \tau)$ and compare the reflexivity of $(X, \tau)$ with the reflexivity of $(X, \tau_F)$. We also show that the reflexivity in an extended normed space is a three space property. As an application to function space, we show  that the collection $C(X)$ of all real valued continuous functions on a metric space $X$ with the extended norm $\parallel f\parallel_\infty = \sup_{x\in X}|f(x)|$ for $f\in C(X)$ (topology of  uniform convergence) is reflexive if and only if $X$ is a finite set.

\section{Preliminaries}

The underlying field of a vector space is denoted by $\mathbb{K}$ which is either $\mathbb{R}$ or $\mathbb{C}$. We adopt the following conventions for $\infty$: $\infty.0=0.\infty=0$; $\infty+\alpha=\alpha+\infty=\infty$ for every $\alpha\in\mathbb{R}$; $\infty.\alpha=\alpha.\infty=\infty$ for $\alpha>0$; $\inf\{\emptyset\}=\infty$.  

An \textit{extended seminorm} $\rho:X\to[0, \infty]$ on a vector space $X$ is a function which satisfies the following properties.
\begin{itemize}
	\item[(1)] $\rho(\alpha x)=|\alpha|\rho(x)$ for each $x\in X$ and scalar $\alpha$;
	\item[(2)] $\rho(x+y)\leq \rho(x)+\rho(y)$ for all $x,y\in X$.\end{itemize}

An \textit{extended norm} $\parallel\cdot\parallel:X\to[0, \infty]$ is an extended seminorm with the property: if $\parallel x\parallel=0$, then $x=0$. A vector space $X$ endowed with an extended norm $\parallel\cdot\parallel$ is called an \textit{extended normed linear space} (or \textit{extended normed space}) (enls, for short), and it is denoted by $(X, \parallel\cdot\parallel)$. The \textit{finite subspace} of an enls $(X, \parallel\cdot\parallel)$ is defined as $$X_{fin}=\{x\in X:\parallel x\parallel<\infty\}.$$ 
Note that the extended norm $\parallel\cdot\parallel$ on $X_{fin}$ is actually a norm. Therefore $(X_{fin}, \parallel\cdot\parallel)$ is a conventional normed linear space. 

We say an enls $(X, \parallel\cdot\parallel)$ is an \textit{extended Banach space} if it is complete with respect to the metric $d(x, y)= \min\{\parallel x-y\parallel, 1\}$ for all $x, y\in X$. One can prove that an enls $(X, \parallel\cdot\parallel)$ is an extended Banach space if and only if the finite space $(X_{fin}, \parallel\cdot\parallel)$ is a Banach space. For details about extended normed linear spaces, we refer to \cite{nwiv, socsiens, spoens}.  

Suppose $(X, \parallel\cdot\parallel_1)$ and $(Y, \parallel\cdot\parallel_2)$ are extended normed linear spaces. Then for a continuous linear map $T:X\to Y$, we define $$\parallel T\parallel_{op}=\sup\{\parallel T(x)\parallel_2: \parallel x\parallel_1\leq 1\}.$$ In particular, if $f\in X^*$, then $\parallel f\parallel_{op}=\sup\{|f(x)|:\parallel x\parallel_1\leq 1\}.$ The following points about an enls $(X, \parallel\cdot\parallel)$ are given in \cite{nwiv}.
\begin{enumerate}
\item $X_{fin}$ is open in $(X, \parallel\cdot\parallel)$. 	
\item $\parallel f\parallel_{op}=\parallel f|_{X_{fin}}\parallel_{op}$ for every $f\in X^*$, where $f|_{X_{fin}}$ is the restriction of $f$ on the normed linear space $(X_{fin}, \parallel\cdot\parallel)$.	
\item For any linear functional $f$ on $X$, we have $f\in X^*$ if and only if $f|_{X_{fin}}$ is continuous on $X_{fin}$. %then $f\in X^*$ $f(x)=0$ for every $x\in X_{fin}$, then $f\in X^*$ and $\parallel f\parallel_{op}=0$. 
\item If $f\in X^*$ and $\parallel f\parallel_{op}\neq 0$, then $|f(x)|\leq \parallel f\parallel_{op}\parallel x\parallel$ for every $x\in X$.
\end{enumerate}

\noindent It follows from the point (2) given above that $\parallel\cdot\parallel_{op}$ may not be a norm on the dual $X^*$ of an enls $(X, \parallel\cdot\parallel)$. However, following \cite{nwiv}, we call it the \textit{operator norm} in the sequel.

A vector space $X$ endowed with a Hausdorff topology $\tau$ is said to be an \textit{extended locally convex space} (elcs, for short) if $\tau$ is induced by a collection $\mathcal{P}=\{\rho_i:i\in\mathcal{I}\}$ of extended seminorms on $X$, that is, $\tau$ is the smallest topology on $X$ under which each $\rho_i$ is continuous. We define $$X_{fin}^\rho=\{x\in X:\rho(x)<\infty\}$$ for any extended seminorm $\rho$ on $X$ and the \textit{finite subspace} $X_{fin}$ of an elcs $(X, \tau)$ by $$X_{fin}=\bigcap\left\lbrace X_{fin}^\rho : \rho  \text{ is continuous on } (X, \tau)\right\rbrace.$$    

Suppose $(X, \tau)$ is an elcs and $\tau$ is induced by a family $\mathcal{P}$ of extended seminorms on $X$. Then the following facts are either easy to verify or given in \cite{esaetvs}. 
\begin{itemize}
	\item[(1)] There exists a neighborhood base $\mathcal{B}$ at $0$ in $(X, \tau)$ such that each element of $\mathcal{B}$ is absolutely convex (balanced and convex);
	\item[(2)] $X_{fin}$ with the subspace topology is a locally convex space;
	\item[(3)] $X_{fin}=\bigcap_{\rho\in\mathcal{P}} X_{fin}^\rho;$
	\item[(4)] if $\rho$ is any continuous extended seminorm on $(X, \tau)$, then $X_{fin}^\rho$ is a clopen subspace of $(X, \tau)$;
	%\item[(5)] a subspace $Y$ is open in $(X, \tau)$ if and only if there exists a continuous extended seminorm $\rho$ on $(X, \tau)$ such that $X_{fin}^\rho\subseteq Y$; 
	\item[(5)] $X_{fin}$ is an open subspace of $(X, \tau)$ if and only if there exists a continuous extended seminorm $\rho$ on $(X, \tau)$ such that $X_{fin}=X_{fin}^\rho$. In this case, we say $(X, \tau)$ is a \textit{fundamental elcs}. \end{itemize}

It is shown in Proposition 4.7 of \cite{esaetvs} that if $\mathcal{B}$ is a neighborhood base at $0$ in an elcs $(X, \tau)$ consisting of absolutely convex sets, then $\tau$ is induced by the collection $\{\mu_{U}: U\in\mathcal{B}\}$ of Minkowski functionals.

\begin{definition}\label{Minkowski funcitonal}	{\normalfont(\cite{esaetvs})}	\normalfont	Suppose $U$ is any absolutely convex set in an elcs $(X, \tau)$. Then the \textit{Minkowski functional} $\mu_U:X\rightarrow[0,\infty]$  for $U$  is defined as  $$\mu_U(x)=\inf\{\lambda>0: x\in \lambda U\}.$$
\end{definition}

The following facts are immediate from the above definition.
\begin{enumerate}
	\item The Minkowski functional $\mu_{U}$ for the set $U$ is an extended seminorm on $X$. In addition, if  $U$ is absorbing, then $\mu_U$ is a seminorm on $X$.
	\item  $\{x\in X:\mu_U(x)<1\}\subseteq U\subseteq \{x\in X:\mu_U(x)\leq 1\}$.
	\item  The Minkowski functional $\mu_U$ is  continuous on $X$ if and only if $U$ is a neighborhood of $0$ in $(X,\tau)$.
\end{enumerate} 

If $A$ is any nonempty set in a topological space $(X,\tau)$, then we denote the closure and interior of $A$ in $(X,\tau)$ by $\text{Cl}_\tau(A)$  and  $\text{int}_\tau(A)$, respectively. We also adopt the following terminology for an elcs $(X, \tau)$. 
\begin{enumerate}
\item If $U$ is any absolutely convex subset of $X$, then $X_{fin}^U=\{x\in X: \mu_U(x)<\infty\}$. 
\item If $A\subseteq X$, then ab($B$) is the smallest absolutely convex set in $X$ that contains $A$.
\item If $A\subseteq X$, then $A^\circ=\{f\in X^*: |f(x)|\leq 1 \text{ for every } x\in A\}$ is called the polar of $A$ in $X^*$.
\item If $A\subseteq X^*$, then $A_\circ=\{x\in X: |f(x)|\leq 1 \text{ for every } f\in A\}$ is called the polar of $A$ in $X$.   
\end{enumerate}  For other terms and definitions, we refer to \cite{lcsosborne, tvsschaefer, willard}.

%The \textit{strong topology} $\tau_s$ on the dual $X^*$ of a locally convex space  $(X, \tau)$, is the topology induced by the collection $\{\rho_{B}: B ~\text{bounded in}~ (X, \tau)\}$ of seminorms on $X^*$, where $\rho_{B}(f)=\sup_{x\in B}|f(x)|$. The dual $X^*$ together with the strong topology is called the \textit{strong dual} of $(X, \tau)$. 
 
\section{Reflexivity}

%In this section, we define and study reflexive elcs. We show that a fundamental elcs $(X, \tau)$ is reflexive if and only if $(X, \tau_{F})$ is reflexive, where $\tau_{F}$ is the corresponding flc topology. We will also show that the reflexivity in an extended Banach space is a three space property. Finally, we find some equivalent conditions for the reflexivity of an elcs (or enls).

The aim of this section is to explore reflexive extended locally convex spaces. To define reflexivity property of an elcs $(X,\tau)$, we first need to define the topology $\tau_{ucb}$, on $X^*$, of uniform convergence on bounded subsets of $(X,\tau)$. For an elcs, the topology $\tau_{ucb}$ has been studied extensively in \cite{doelcs}. We first give the definition of a bounded set in an elcs.      

%Recall that for a normed linear space $(X,\parallel\cdot\parallel)$, the canonical map $$J:(X,\parallel\cdot\parallel)\rightarrow(X^*,\parallel\cdot\parallel_{op})^*$$ is defined as
%$J(x)=J_x ~\text{for every }~ x\in X,$ where $$J_x:(X^*, \parallel\cdot\parallel_{op})\rightarrow \mathbb{K}$$  $$J_x(f)=f(x) ~\text{for every }~ f\in X^*.$$  If we try to define the canonical map $J$ for an enls in a similar way as defined in the case of a normed linear space, then the map $J$ may not be well defined. Suppose $(X, \parallel\cdot\parallel)$ is an enls. Then for every $z\notin X_{fin}$ and $n\in \mathbb{N}$, we can find a linear functional $f_n$ on $X$ such that $f_n(z)=n$ and $f(x)=0$ for every $x\in X_{fin}$. Clearly, $\parallel f_n\parallel_{op}=0$ for every $n\in\mathbb{N}$. Consequently, $f_n\to 0$ in $(X^*, \parallel\cdot\parallel_{op})$ and $|J_z(f_n)|=n\nrightarrow 0$. So we need to replace $\parallel\cdot\parallel_{op}$ by a suitable locally convex space topology to study the reflexivity of these new extended spaces. 
%
%The generalization of the operator norm topology in a locally convex space is the strong topology $\tau_s$ which is the topology of uniform convergence on bounded subsets of that locally convex space. In the case of an elcs (or enls) $X$, the topology of uniform convergence on bounded subsets of $X$ is studied by authors in \cite{spoens, doelcs}. They denote this topology by $\tau_{ucb}$. We use this topology to study the reflexivity of an elcs. 

\begin{definition}\label{bounded set in an elcs}{\normalfont(\cite{flctopology})} \normalfont	Suppose $(X, \tau)$ is an elcs. Then $A\subseteq X$ is said to be \textit{bounded} in $(X, \tau)$ if for every neighborhood $U$ of $0$, there exist $r>0$ and a finite set $F\subseteq X$ such that $A\subseteq F+rU$. \end{definition}

\noindent The following points about bounded sets in an elcs $(X,\tau)$ are either easy to prove or given in \cite{flctopology}.
\begin{enumerate}
	\item Every finite subset of $X$ is bounded. 
	\item Every subset of a bounded set is bounded.
	\item When $(X, \tau)$ is a conventional locally convex space, $A\subseteq X$ is bounded in the sense of Definition \ref{bounded set in an elcs} if and only if it is absorbed by each neighborhood of $0$ in $(X, \tau)$.
	\item Suppose $(Y,\sigma)$ is any elcs and $T:X\rightarrow Y$ is a  continuous linear operator. Then for every bounded set $A$ in $(X,\tau)$, $T(A)$ is bounded in $(Y,\sigma)$. In particular, for every $f\in X^*$, $f(A)$ is bounded in $\mathbb{K}$. 
	\item No subspace $\left( \text{other than the zero subspace} \right)$  is bounded.
	\item If $x_n\to 0$ in $(X,\tau)$, then $\{x_n:n\in \mathbb{N}\}$ is  bounded in  $(X, \tau)$. \end{enumerate}

\begin{definition}\label{definition of topology of uniform topology}\normalfont (\cite{doelcs})
	Let $(X, \tau)$ be an elcs. Then the topology $\tau_{ucb}$, on $X^*$, of \textit{uniform convergence on bounded subsets of} $(X, \tau)$ is induced by the collection $\mathcal{P}=\{\rho_B: B ~\text{is bounded subset of}~ (X, \tau)\}$ of seminorms on $X^*$, where $\rho_B(\phi)=\sup_{x\in B}|\phi(x)| ~\text{for}~ \phi\in X^*$.\end{definition}

\noindent The following points for an elcs $(X, \tau)$ are either easy to verify or given in \cite{nwiv, doelcs}. 
\begin{enumerate}
 \item $(X^*, \tau_{ucb})$ is a locally convex space and $\mathcal{B}=\{B^\circ: B ~\text{is bounded in}~ (X, \tau)\}$ is a neighborhood base at $0$ in $(X^*, \tau_{ucb})$, where the polar $B^\circ$ of $B\subseteq X$ is given by $B^\circ=\{\phi\in X^*: |\phi(x)|\leq 1 \text{ for all } x\in B\}$.
 \item Recall from \cite{doelcs} that the \textit{weak$^*$ topology} on $X^*$ is induced by the collection $\{\rho_x: x\in X\}$ of seminorms, where $\rho_x(f)=|f(x)|$ for every $f\in X^*$. Clearly,  $\tau_{w^*}$ is coarser than $\tau_{ucb}$.
 %\item The canonical map $J:(X, \tau)\to (X^*, \tau_{ucb})^*$ is  well defined.
 \item If $(X, \parallel\cdot\parallel)$ is an enls such that $X=X_{fin}\oplus M$, then $(X^*, \tau_{ucb})$ is isomorphic (linear homeomorphic) to $(X_{fin}^*, \parallel\cdot\parallel_{op})\times (M^*, \tau_{w^*})$, where $\tau_{w^*}$ is the weak$^*$ topology on the dual $M^*$ of the enls $(M, \parallel\cdot\parallel)$ (see, Theorem 4.11 in \cite{nwiv}).
\end{enumerate}
%
%Suppose $(X, \tau)$ is an elcs. Then the canonical map $J:X\to (X^*, \tau_{ucb})^*$ is defined as $J(x)=J_x$, where $J_x(\phi)=\phi(x)$ for  $\phi\in X^*$. It is easy to verify that the map $J$ is well defined. For topological properties (continuity etc) of $J$, we take strong dual topology on $(X^*, \tau_{ucb})^*$ (which is possible as $(X^*, \tau_{ucb})$ is a locally convex space).  For more details on strong topology on the dual a locally convex space, we refer to \cite{tvsnarici,tvsschaefer}.  

Recall that when $(X,\tau)$ is a classical locally convex space, the topology $\tau_{ucb}$ is more popularly known as the strong topology.  In particular, for an elcs $(X, \tau)$ with the flc topology $\tau_F$ by the \textit{strong topology} $\tau_s$ on $X^*$, we mean the topology of uniform convergence on bounded subsets of the locally convex space $(X, \tau_F)$.  

\begin{remark}It is easy to prove that if $\tau_F$ is the flc topology of an elcs $(X, \tau)$, then every bounded set in $(X, \tau)$ is bounded in $(X, \tau_F)$. Converse may not  be true (see, Proposition 5.3 in \cite{doelcs}). Therefore $\tau_{ucb}$ is coarser than $\tau_s$. 
\end{remark}

For an elcs $(X,\tau)$ and $x \in X$, if a net $(f_\lambda)$ converges to $f$ in $(X^*,\tau_{ucb})$, then $f_\lambda(x) \to f(x)$. Consequently, the map $J_x: (X^*,\tau_{ucb}) \to \mathbb{K}$ defined by $J_x(f)= f(x)$ for $f \in X^*$ is a continuous linear functional. Hence the \textit{canonical map} $J: X \to (X^*,\tau_{ucb})^*$ defined by $J(x) = J_x$ for all $x \in X$ is well defined. %Moreover, while considering the continuity of $J$ we equip  $(X^*,\tau_{ucb})^*$ with the topology of uniform convergence on bounded subsets of $(X^*,\tau_{ucb})$. 

\begin{remark}
For an enls $(X,\parallel\cdot\parallel)$,  the map $J:(X,\parallel\cdot\parallel)\rightarrow(X^*,\parallel\cdot\parallel_{op})^*$ may not be well defined. Since for every $z\notin X_{fin}$ and $n\in \mathbb{N}$, we can find a continuous linear functional $f_n$ on $X$ such that $f_n(z)=n$ and $f(x)=0$ for every $x\in X_{fin}$. Clearly, $\parallel f_n\parallel_{op}=0$ for every $n\in\mathbb{N}$. Consequently, $f_n\to 0$ in $(X^*, \parallel\cdot\parallel_{op})$ but $|J_z(f_n)|=n\nrightarrow 0$.
\end{remark} 
  
\begin{definition}\normalfont Let $(X, \tau)$ be an elcs. Then we say $X$ is \textit{semi-reflexive} if the canonical map $J$ is surjective, and we say $X$ is \textit{reflexive} if $J$ is both surjective and continuous when $(X^*,\tau_{ucb})^*$ is equipped with the topology of uniform convergence on bounded subsets of $(X^*,\tau_{ucb})$. \end{definition} 
\begin{example}\label{basic examples of a reflexive elcs}  Let $X$ be a vector space with the \textit{discrete extended norm} defined by \[ \parallel x\parallel_{0, \infty}=
	\begin{cases}
	\text{0;} &\quad\text{if $x=0 $}\\
	\text{$\infty$;} &\quad\text{if $x\neq 0.$ }\\
	\end{cases}\] Then only finite subsets are bounded in $(X, \parallel\cdot\parallel_{0, \infty})$. Since the given space is a discrete space, the canonical map $J$ is continuous. Now, if $\psi \in (X^*, \tau_{ucb})^*$, then there exists a finite set $A=\{x_1, x_2, ...,x_n\} \subseteq X$ such that $A^\circ \subseteq \psi^{-1}(-1,1)$. It is easy to show that $\bigcap_{x\in A} J_x^{-1}(0)\subseteq \psi^{-1}(0)$ (if $\epsilon>0$ and $f(x_j)=0$ for $1\leq j\leq n$, then $\frac{1}{\epsilon}f(x_j)=0$ for $1\leq j\leq n$. So $|\psi(f)|<\epsilon$). By Lemma 3.9, p. 67  in \cite{faaidg}, $\psi$ is a linear combination of $J_{x_1}, J_{x_2},.., J_{x_n}$. Therefore $J$ is surjective. Hence $(X, \parallel\cdot\parallel_{0, \infty})$ is reflexive. \end{example}

	\begin{proposition}\label{continuity of the canonical map}
		Let $(X, \parallel\cdot\parallel)$ be an enls. Then $J:(X,\parallel\cdot\parallel)\rightarrow (X^*, \tau_{ucb})^*$  is always continuous.  \end{proposition}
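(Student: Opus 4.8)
The plan is to unwind the two topologies involved and reduce everything to one estimate. The topology carried by $(X^*,\tau_{ucb})^*$ in the statement is that of uniform convergence on bounded subsets of the locally convex space $(X^*,\tau_{ucb})$, so it is generated by the seminorms $\psi\mapsto\sup_{f\in\mathcal F}|\psi(f)|$, where $\mathcal F$ ranges over the bounded subsets of $(X^*,\tau_{ucb})$. Hence $J$ will be continuous as soon as, for every such $\mathcal F$, the function
\[
p_{\mathcal F}(x)=\sup_{f\in\mathcal F}|J_x(f)|=\sup_{f\in\mathcal F}|f(x)|,\qquad x\in X,
\]
is continuous on $(X,\parallel\cdot\parallel)$. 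First I would observe that each $J_x$ lies in $(X^*,\tau_{ucb})^*$ and that a continuous linear functional sends bounded sets to bounded sets, so $J_x(\mathcal F)$ is bounded in $\mathbb K$; thus $p_{\mathcal F}$ is a genuine, finite-valued seminorm on $X$, and the whole task is to show each $p_{\mathcal F}$ is $\parallel\cdot\parallel$-continuous.

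The crucial step is to convert ``$\mathcal F$ is $\tau_{ucb}$-bounded'' into a uniform bound on operator norms. Let $U_0=\{x\in X:\parallel x\parallel\le 1\}$; then $U_0\subseteq X_{fin}$, and $U_0$ is bounded in $(X,\parallel\cdot\parallel)$ (given a basic neighborhood $\{x:\parallel x\parallel<\varepsilon\}$ of $0$, pick $r$ with $r\varepsilon>1$, so $U_0\subseteq r\{x:\parallel x\parallel<\varepsilon\}$). Consequently $U_0^{\circ}$ is a neighborhood of $0$ in $(X^*,\tau_{ucb})$, and by point $(2)$ in the list of facts about an enls one has $U_0^{\circ}=\{f\in X^*:\parallel f\parallel_{op}\le 1\}$. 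Since $\mathcal F$ is bounded in the locally convex space $(X^*,\tau_{ucb})$, it is absorbed by $U_0^{\circ}$: there is $\lambda>0$ with $\mathcal F\subseteq\lambda U_0^{\circ}$, and therefore $M:=\sup_{f\in\mathcal F}\parallel f\parallel_{op}\le\lambda<\infty$.

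With $M<\infty$ in hand, continuity of $p_{\mathcal F}$ becomes routine. For $x\in X_{fin}$, point $(4)$ in the same list (together with the trivial case $\parallel f\parallel_{op}=0$, where $f$ vanishes on $X_{fin}$) gives $p_{\mathcal F}(x)\le M\parallel x\parallel$. Now fix $x_0\in X$; the set $\{x:\parallel x-x_0\parallel<\infty\}$ is the clopen coset $x_0+X_{fin}$, which contains the open ball $\{x:\parallel x-x_0\parallel<1\}$, and for $x$ in this coset the triangle inequality for the seminorm $p_{\mathcal F}$ yields $|p_{\mathcal F}(x)-p_{\mathcal F}(x_0)|\le p_{\mathcal F}(x-x_0)\le M\parallel x-x_0\parallel\le (M+1)\parallel x-x_0\parallel$. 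Hence, given $\varepsilon>0$, on the neighborhood $\{x:\parallel x-x_0\parallel<\min\{1,\varepsilon/(M+1)\}\}$ of $x_0$ we have $|p_{\mathcal F}(x)-p_{\mathcal F}(x_0)|<\varepsilon$, so $p_{\mathcal F}$ is continuous, and therefore so is $J$.

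The one point I expect to need real care is the identification $U_0^{\circ}=\{f:\parallel f\parallel_{op}\le1\}$ together with the observation that, although $X_{fin}$ itself is never bounded, its closed unit ball $U_0$ \emph{is} bounded and its polar is a $\tau_{ucb}$-neighborhood of $0$; this is exactly what feeds boundedness in $(X^*,\tau_{ucb})$ back into an operator-norm estimate on $X$. Everything after that is standard, the only subtlety being the possible infinitude of $\parallel\cdot\parallel$, which is handled by localizing to the coset $x_0+X_{fin}$.
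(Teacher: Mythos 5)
Your proof is correct and follows essentially the same route as the paper's: the heart of both arguments is that a bounded subset of $(X^*,\tau_{ucb})$ is uniformly bounded in operator norm, after which the estimate $|f(x)|\le M\parallel x\parallel$ on $X_{fin}$ (together with openness of $X_{fin}$, equivalently of the cosets $x_0+X_{fin}$) gives continuity of $J$. The only differences are cosmetic: the paper checks sequential continuity at $0$ using metrizability of the enls, while you verify the seminorm estimates directly and, usefully, make explicit the absorption argument (boundedness of the unit ball $U_0$ and the identification $U_0^\circ=\{f:\parallel f\parallel_{op}\le 1\}$) that the paper leaves implicit.
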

	\begin{proof}
		Let  $x_n\rightarrow 0$ in $(X, \parallel\cdot\parallel)$ and let $B$ be a bounded subset of $(X^*,\tau_{ucb})$. Then there exist  $\alpha>0$ and $n_0\in\mathbb{N}$ such that $\parallel f\parallel_{op}\leq \alpha$ and $x_n\in X_{fin}$ for each $f\in B$, $n\geq n_0$. Consequently,	$$|J_{x_n}(f)|=|f(x_n)|\leq \alpha \parallel x_n\parallel ~\text{for every } f\in B~\text{and}~ n\geq n_0.$$  Therefore $J_{x_n}\rightarrow 0$ in $(X^*, \tau_{ucb})^*$. Hence $J$ is continuous.\end{proof}
	
	\begin{corollary}\label{equivalence of reflexive and semireflexive in an enls}
	Let $(X, \parallel\cdot\parallel)$ be an enls. Then $X$ is reflexive if and only if it is semi-reflexive.  \end{corollary}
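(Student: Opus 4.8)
The plan is to recognise that this corollary is a bookkeeping consequence of the \emph{definition} of reflexivity together with Proposition \ref{continuity of the canonical map}. Reflexivity of $X$ is, by definition, the conjunction of two conditions on the canonical map $J\colon (X,\parallel\cdot\parallel)\to (X^*,\tau_{ucb})^*$: surjectivity, and continuity when the target carries the topology of uniform convergence on bounded subsets of $(X^*,\tau_{ucb})$. Semi-reflexivity asks only for the first of these. Hence one implication, reflexive $\Rightarrow$ semi-reflexive, is immediate and in fact holds for every elcs, not just for an enls; I would dispose of it in a single line.

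For the converse the point is precisely that in an enls the continuity condition is automatic. First I would invoke Proposition \ref{continuity of the canonical map}, which asserts that $J$ is continuous from $(X,\parallel\cdot\parallel)$ into $(X^*,\tau_{ucb})^*$ equipped with the topology of uniform convergence on bounded subsets of $(X^*,\tau_{ucb})$ --- exactly the topology that occurs in the definition of reflexivity. Therefore, if $X$ is semi-reflexive, then $J$ is surjective, and being also continuous by Proposition \ref{continuity of the canonical map}, it meets both defining requirements; so $X$ is reflexive.

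The only point deserving a moment's care is to confirm that the continuity furnished by Proposition \ref{continuity of the canonical map} is continuity in the topological (not merely sequential) sense demanded by the definition. This is not a genuine obstacle: an enls $(X,\parallel\cdot\parallel)$ is metrizable, the metric $d(x,y)=\min\{\parallel x-y\parallel,1\}$ inducing its topology, with $\{x:\parallel x\parallel<1/n\}_{n\in\mathbb{N}}$ a countable neighborhood base at $0$. Hence the sequential continuity of $J$ out of $(X,\parallel\cdot\parallel)$ established in the proof of Proposition \ref{continuity of the canonical map} is equivalent to continuity, and the equivalence reflexive $\iff$ semi-reflexive follows at once. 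I anticipate no real difficulty here, since the substantive content has already been carried out in Proposition \ref{continuity of the canonical map}.
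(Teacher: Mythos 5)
Your proposal is correct and is exactly the argument the paper intends: the corollary follows immediately from the definition of reflexivity together with Proposition \ref{continuity of the canonical map}, which makes the continuity requirement automatic for an enls. Your extra remark that the metric $d(x,y)=\min\{\parallel x-y\parallel,1\}$ makes sequential continuity of the (linear) map $J$ equivalent to continuity is a fair and accurate justification of the step the paper leaves implicit.
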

 
We next study the reflexivity of an elcs $(X, \tau)$ in relation to the properties of the corresponding finest space $(X, \tau_{F})$. %In particular, we show that an elcs $(X, \tau)$ is reflexive whenever the finest space $(X, \tau_F)$ is reflexive.  
 
Recall that a locally convex space $(X, \tau)$ is said to be \textit{barreled} if each barrel (closed, absolutely convex and absorbing set) is a neighborhood of $0$.       	
\begin{proposition}\label{reflexive of an elcs imply barrelledness of finest space}
	Let $(X, \tau)$ be a reflexive elcs with the flc topology $\tau_{F}$. Then $(X, \tau_F)$ is barreled.\end{proposition}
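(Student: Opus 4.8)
The plan is to show directly that every barrel in $(X,\tau_F)$ is a neighborhood of $0$. The two halves of the reflexivity hypothesis enter separately: surjectivity of $J$ will be used to replace ``bounded in $\tau_{w^*}$'' by ``bounded in $\tau_{ucb}$'', while continuity of $J$ will be used to turn a polar computed in $(X^*,\tau_{ucb})^*$ back into a $\tau$-neighborhood of $0$.

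So fix a barrel $T$ in $(X,\tau_F)$, i.e.\ a $\tau_F$-closed, absolutely convex, absorbing set, and consider its polar $T^\circ\subseteq X^*$. Since $T$ is absorbing, for each $x\in X$ there is $\mu>0$ with $x\in\mu T$, and then $|f(x)|\le\mu$ for all $f\in T^\circ$; hence $T^\circ$ is bounded in the weak$^*$ topology $\tau_{w^*}$ on $X^*$. Next I would argue that $T^\circ$ is in fact bounded in $(X^*,\tau_{ucb})$: by semi-reflexivity $(X^*,\tau_{ucb})^*=J(X)$, so $\tau_{w^*}$ is exactly the weak topology $\sigma\bigl(X^*,(X^*,\tau_{ucb})^*\bigr)$ of that dual pair, and since in any locally convex space a set is bounded iff it is weakly bounded (Mackey's theorem on bounded sets), boundedness of $T^\circ$ in $\tau_{w^*}$ forces boundedness in $\tau_{ucb}$. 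Consequently the set $V=\{\psi\in(X^*,\tau_{ucb})^*:|\psi(f)|\le1\text{ for all }f\in T^\circ\}$ is a neighborhood of $0$ for the topology of uniform convergence on bounded subsets of $(X^*,\tau_{ucb})$, so by continuity of $J$ the set $J^{-1}(V)$ is a $\tau$-neighborhood of $0$. But $J^{-1}(V)=\{x\in X:|f(x)|\le1\text{ for all }f\in T^\circ\}=(T^\circ)_\circ$, which by the bipolar theorem for the pair $\langle X,X^*\rangle$ (legitimate because $(X,\tau_F)$ has dual $X^*$) equals $T$, since $T$ is absolutely convex and $\tau_F$-closed, hence $\sigma(X,X^*)$-closed. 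Thus $T$ is an absolutely convex, absorbing $\tau$-neighborhood of $0$, and since such sets form a base of neighborhoods of $0$ for the flc topology $\tau_F$, we conclude that $T$ is a $\tau_F$-neighborhood of $0$. Hence $(X,\tau_F)$ is barreled.

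I expect the boundedness upgrade — from $\tau_{w^*}$-bounded to $\tau_{ucb}$-bounded — to be the only real obstacle: it is the step that genuinely uses semi-reflexivity, and it requires checking the duality bookkeeping, namely that $J$ identifies $X$ with $(X^*,\tau_{ucb})^*$ in such a way that $\tau_{w^*}$ coincides with $\sigma\bigl(X^*,(X^*,\tau_{ucb})^*\bigr)$, before Mackey's theorem can be invoked. Everything else is polar calculus together with the description of the neighborhoods of $0$ in the flc topology.
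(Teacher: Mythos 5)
Your argument is correct and is essentially the paper's own proof: take the polar of the barrel, note it is weak$^*$-bounded because the barrel is absorbing, use surjectivity of $J$ (semi-reflexivity) together with Mackey's theorem to upgrade this to boundedness in $(X^*,\tau_{ucb})$, then use continuity of $J$ and the bipolar theorem in $(X,\tau_F)$ to conclude the barrel is a $\tau$-neighborhood of $0$, and finally transfer this to $\tau_F$. The only cosmetic difference is the last step, where the paper argues via continuity of the Minkowski functional $\mu_B$ on $(X,\tau_F)$ (Theorem 3.5 of \cite{flctopology}), while you invoke the equivalent description of $\tau_F$ through absolutely convex absorbing $\tau$-neighborhoods of $0$.
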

\begin{proof}
	 Let $B$ be a barrel in $(X, \tau_F)$. By Theorem 8.8.3, p. 251 in \cite{tvsnarici}, $B^\circ$ is pointwise bounded. Since $(X, \tau)$ is reflexive, we have  $(X^*, \tau_{ucb})^*= \{J_x: x\in X \}$. So $B^\circ$ is bounded in $(X^*, \tau_{ucb})$. Consequently, $(B^\circ)^\circ$ is a neighborhood of $0$ in $(X^*, \tau_{ucb})^*$. As $J:(X, \tau)\to (X^*, \tau_{ucb})^*$ is continuous, $J^{-1}\left( (B^\circ)^\circ\right)$ is a neighborhood of $0$ in $(X, \tau)$. Note that $J^{-1}\left((B^\circ)^\circ\right)= (B^\circ)_\circ$. Consequently, by applying Bipolar theorem in $(X,\tau_F)$, we have  $(B^\circ)_\circ = B$. Therefore $B$ is a neighborhood of $0$ in $(X, \tau)$. Since $B$ is an absorbing and absolutely convex neighborhood of $0$ in $(X, \tau)$, the Minkowski functional $\mu_{B}$ is a continuous seminorm on $(X, \tau)$. By Theorem 3.5 in \cite{flctopology}, $\mu_B$ is a continuous seminorm on $(X, \tau_F)$. Therefore $B$ is a neighborhood of $0$ in $(X, \tau_F)$. Hence $(X, \tau_F)$ is barreled.\end{proof}

\begin{theorem}\label{reflexivity of an elcs and finest space}
	Let $(X, \tau)$ be an elcs with the flc topology $\tau_{F}$. If $(X, \tau_F)$ is reflexive, then $(X, \tau)$ is reflexive. Converse holds if $(X, \tau_F)$ is semi-reflexive.\end{theorem}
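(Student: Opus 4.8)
The plan is to prove the two implications separately. Beyond what is recorded in the excerpt, I shall use the inclusions $\tau_{w^*}\subseteq\tau_{ucb}\subseteq\tau_s$ of topologies on $X^*$; the fact that the polars of the $\tau_{ucb}$-bounded subsets of $X^*$ form a neighbourhood base at $0$ in $(X^*,\tau_{ucb})^*$ for the topology of uniform convergence on bounded subsets of $(X^*,\tau_{ucb})$ (the analogue, one level up, of point~(1) following Definition~\ref{definition of topology of uniform topology}); Proposition~\ref{reflexive of an elcs imply barrelledness of finest space}; and the classical fact that a semi-reflexive barreled locally convex space is reflexive (see, e.g., \cite{tvsnarici}).

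Assume first that $(X,\tau_F)$ is reflexive; I must show that $J\colon X\to(X^*,\tau_{ucb})^*$ is surjective and continuous. For surjectivity, note that since $\tau_{ucb}$ is coarser than $\tau_s$, every $\tau_{ucb}$-continuous linear functional on $X^*$ is $\tau_s$-continuous, so $(X^*,\tau_{ucb})^*\subseteq(X^*,\tau_s)^*$; and $(X,\tau_F)$, being reflexive, is semi-reflexive, so $(X^*,\tau_s)^*=\{J_x:x\in X\}$. Since every $J_x$ already lies in $(X^*,\tau_{ucb})^*$, these inclusions force $(X^*,\tau_{ucb})^*=\{J_x:x\in X\}$, i.e.\ $J$ is onto. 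For continuity, it suffices to check that $J^{-1}(B^\circ)$ is a $\tau$-neighbourhood of $0$ for every $\tau_{ucb}$-bounded $B\subseteq X^*$, where $B^\circ$ denotes the polar of $B$ in $(X^*,\tau_{ucb})^*$; and $J^{-1}(B^\circ)=B_\circ$. Fix such a $B$. As $\tau_{w^*}$ is coarser than $\tau_{ucb}$, $B$ is weak$^*$-bounded, that is, $\sup_{f\in B}|f(x)|<\infty$ for each $x\in X$. Hence $B_\circ=\bigcap_{f\in B}\{x\in X:|f(x)|\le1\}$ is absolutely convex, $\tau_F$-closed (each $f\in X^*$ is $\tau_F$-continuous), and absorbing (by pointwise boundedness), so $B_\circ$ is a barrel in $(X,\tau_F)$. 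Since $(X,\tau_F)$ is reflexive it is barreled, whence $B_\circ$ is a $\tau_F$-neighbourhood of $0$, and therefore a $\tau$-neighbourhood of $0$ because $\tau_F\subseteq\tau$. Thus $J$ is continuous and $(X,\tau)$ is reflexive.

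For the converse, assume $(X,\tau)$ is reflexive and $(X,\tau_F)$ is semi-reflexive. By Proposition~\ref{reflexive of an elcs imply barrelledness of finest space}, $(X,\tau_F)$ is barreled; being also semi-reflexive, $(X,\tau_F)$ is reflexive.

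The crux, as I see it, is the continuity step in the first implication. One has to notice that reflexivity of $(X,\tau_F)$ is used there only through the barreledness of $(X,\tau_F)$ --- semi-reflexivity would not be enough to make $B_\circ$ a neighbourhood of $0$ --- and that one must first pass from the $\tau_{ucb}$-boundedness of $B$ down to weak$^*$-boundedness, via $\tau_{w^*}\subseteq\tau_{ucb}$, in order to recognise $B_\circ$ as a barrel. The surjectivity claims and the converse are then routine once one keeps track of the inclusions $\tau_{w^*}\subseteq\tau_{ucb}\subseteq\tau_s$ and invokes Proposition~\ref{reflexive of an elcs imply barrelledness of finest space}.
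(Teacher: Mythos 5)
Your proposal is correct and follows essentially the same route as the paper: surjectivity via the inclusion $\tau_{ucb}\subseteq\tau_s$ together with $(X^*,\tau_s)^*=\{J_x:x\in X\}$, continuity by recognising $B_\circ=J^{-1}(B^\circ)$ as a barrel in $(X,\tau_F)$ (using weak$^*$-boundedness of $B$ for absorbency) and invoking barreledness of the reflexive space $(X,\tau_F)$, and the converse via Proposition~\ref{reflexive of an elcs imply barrelledness of finest space} plus the fact that a semi-reflexive barreled space is reflexive. The only cosmetic difference is that you verify directly that $B_\circ$ is a barrel where the paper cites a result from Narici--Beckenstein.
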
	
	\begin{proof} Let $(X, \tau_F)$ be reflexive. Since $ \tau_{ucb}\subseteq \tau_s$ and $(X^*, \tau_s)^*=\{J_x : x\in X\}$, we have $(X^*, \tau_{ucb})^*=\{J_x : x\in X\}$. Therefore $(X, \tau)$ is semi-reflexive. Now, let $B$ be a bounded set in $(X^*, \tau_{ucb})$. Then $B$ is bounded in $(X^*, \tau_{w^*})$. By Theorem 8.8.3, p. 241 in \cite{tvsnarici}, $B_\circ$ is an absorbing subset of $X$. Note that  $J^{-1}(B^\circ)=\{x\in X: |J_x(f)|\leq 1 ~\text{for}~ f\in B\}=B_\circ$ is a barrel in $(X, \tau_F)$ (see, Theorem 8.3.6, p. 234 in \cite{tvsnarici}). Since $(X, \tau_F)$ is reflexive, by Theorem 15.2.6, p. 490 in \cite{tvsnarici}, $(X, \tau_F)$ is barreled. Consequently, $B_\circ$ is a neighborhood of $0$ in $(X, \tau_F)$. So $J^{-1}(B^\circ)=B_\circ$ is a neighborhood of $0$ in $(X, \tau)$. Therefore the canonical map $J$ is continuous on $(X, \tau)$. Hence $(X, \tau)$ is reflexive.   
	
   Conversely, suppose $(X, \tau)$ is reflexive and $(X, \tau_F)$ is semi-reflexive. Then by Proposition \ref{reflexive of an elcs imply barrelledness of finest space}, $(X, \tau_F)$ is barreled. By Theorem 15.2.6, p. 490 in \cite{tvsnarici}, $(X, \tau_F)$ is reflexive. \end{proof}
\begin{corollary}\label{reflexivity of a fundamental elcs}
	Let $(X, \tau)$ be an elcs with the flc topology $\tau_{F}$. Suppose anyone of the following conditions holds
	\begin{itemize}
		\item[(1)] $(X^*, \tau_{ucb})$ is barreled;
		\item[(2)] $(X, \tau)$ is a fundamental elcs.\end{itemize} Then $(X, \tau)$ is reflexive if and only if $(X, \tau_F)$ is reflexive.  \end{corollary}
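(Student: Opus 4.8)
The plan is to deduce the corollary from Theorem~\ref{reflexivity of an elcs and finest space}. That theorem gives reflexivity of $(X,\tau)$ from reflexivity of $(X,\tau_F)$ with no extra hypotheses, and it reduces the converse implication to showing that $(X,\tau_F)$ is \emph{semi}-reflexive, i.e.\ $(X^*,\tau_s)^*=\{J_x:x\in X\}$. So under either hypothesis~(1) or~(2) I would prove the sharper statement
\[
\tau_{ucb}=\tau_s\quad\text{on } X^*;
\]
granting it, reflexivity of $(X,\tau)$ yields $(X^*,\tau_s)^*=(X^*,\tau_{ucb})^*=\{J_x:x\in X\}$, the required semi-reflexivity. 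Since $\tau_{ucb}$ is always coarser than $\tau_s$, everything reduces to proving $\tau_s\subseteq\tau_{ucb}$; that is, for each set $B$ bounded in $(X,\tau_F)$ the polar $B^\circ$ must be shown to be a $\tau_{ucb}$-neighborhood of $0$ in $X^*$ (such polars forming a neighborhood base at $0$ for $\tau_s$).

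Under hypothesis~(1) I would simply check that $B^\circ$ is a barrel in the locally convex space $(X^*,\tau_{ucb})$. It is absolutely convex and $\tau_{w^*}$-closed, hence $\tau_{ucb}$-closed because $\tau_{w^*}\subseteq\tau_{ucb}$; and it is absorbing in $X^*$, since each $f\in X^*=(X,\tau_F)^*$ is bounded on the $\tau_F$-bounded set $B$, so $f\in\lambda B^\circ$ for $\lambda$ large. Barreledness of $(X^*,\tau_{ucb})$ then forces $B^\circ$ to be a neighborhood of $0$. (This argument does not even use that $(X,\tau)$ is reflexive.)

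Under hypothesis~(2) I would use the splitting of a fundamental elcs: picking a linear complement $M$ of the clopen subspace $X_{fin}$, one has $(X,\tau)\cong(X_{fin},\tau|_{X_{fin}})\times(M,\delta)$, with $\delta$ the discrete topology on $M$, and hence $(X,\tau_F)\cong(X_{fin},\tau|_{X_{fin}})\times(M,\tau_0)$, where $\tau_0$ is the finest locally convex topology on $M$. Then, for a $\tau_F$-bounded set $B$ with projections $B_1\subseteq X_{fin}$ and $B_2\subseteq M$, one has $\rho_B\le\rho_{B_1}+\rho_{B_2}$ on $X^*$. Here $B_1$ is bounded in $(X_{fin},\tau|_{X_{fin}})$, hence in $(X,\tau)$ since $X_{fin}$ is clopen, so $\rho_{B_1}$ is already one of the defining seminorms of $\tau_{ucb}$; and $B_2$ is bounded in $(M,\tau_0)$, so it is contained and bounded in a finite-dimensional subspace $\mathrm{span}\{m_1,\dots,m_k\}$, whence $\rho_{B_2}(f)\le C\sum_{j=1}^{k}|f(m_j)|$ for a constant $C$ --- a finite sum of defining seminorms of $\tau_{ucb}$. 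Therefore $\rho_B$ is $\tau_{ucb}$-continuous, $B^\circ$ is a $\tau_{ucb}$-neighborhood of $0$, and $\tau_s\subseteq\tau_{ucb}$, as wanted.

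I expect case~(2) to be the main obstacle. One must justify, for a \emph{general} fundamental elcs rather than just an extended normed space, that $(X,\tau)$ really factors as $(X_{fin},\tau|_{X_{fin}})\times(M,\delta)$, that forming the flc topology commutes with this finite product (so that $\tau_F$ has the stated product form), and that the $\tau_F$-bounded subsets of $M$ are exactly the bounded subsets of its finite-dimensional subspaces; the splitting results underlying point~(3) following Definition~\ref{definition of topology of uniform topology} (stated there only for extended normed spaces) are essentially what is needed. The subtlety is genuinely a feature of the extended setting: because the absolutely convex hull of a bounded set in an elcs need not be bounded, one cannot enlarge $B$ to a $\tau$-bounded set, and must instead dominate the seminorm $\rho_B$ directly, as above.
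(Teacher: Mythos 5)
Your proposal is correct and follows essentially the same route as the paper: reduce everything to the equality $\tau_{ucb}=\tau_s$ on $X^*$ and then invoke Theorem \ref{reflexivity of an elcs and finest space}. The only difference is that the paper simply cites Theorems 5.9 and 5.12(2) of \cite{doelcs} for this equality, whereas you prove it directly --- your barrel argument under (1) and your splitting argument under (2) (whose required ingredients are exactly Lemma \ref{X is isomorphic to finite subspace and M} and Remark \ref{reflexivity of M}, together with the standard description of bounded sets for the finest locally convex topology) are sound and in effect reprove those cited results.
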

	\begin{proof}
		Suppose anyone of the given assumptions holds. Then by Theorem 5.9 and Theorem 5.12(2) in \cite{doelcs}, we have $\tau_{ucb}=\tau_{s}$. Therefore  $(X, \tau)$ is semi-reflexive if and only if $(X, \tau_F)$ is semi-reflexive. By Theorem \ref{reflexivity of an elcs and finest space}, $(X, \tau)$ is reflexive if and only if $(X, \tau_F)$ is reflexive.   \end{proof}
	
\begin{corollary}\label{reflexivity of an enls}Suppose $(X, \parallel\cdot\parallel)$ is an enls. Then $(X, \parallel\cdot\parallel)$ is reflexive if and only if $(X, \tau_F)$ is reflexive.\end{corollary}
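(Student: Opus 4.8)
The plan is to recognize this as an immediate special case of Corollary \ref{reflexivity of a fundamental elcs}, so essentially no new work is required; the only thing to verify is that every enls is a fundamental elcs.

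First I would observe that if $(X,\parallel\cdot\parallel)$ is an enls, then the extended norm $\parallel\cdot\parallel$ is itself a continuous extended seminorm on $(X,\parallel\cdot\parallel)$, and by definition $X_{fin}^{\parallel\cdot\parallel}=\{x\in X:\parallel x\parallel<\infty\}=X_{fin}$. Hence, by fact (5) in the list of properties of an elcs in Section 2 (equivalently, since $X_{fin}$ is open in $(X,\parallel\cdot\parallel)$, which is one of the recalled points about enls from \cite{nwiv}), the space $(X,\parallel\cdot\parallel)$ is a fundamental elcs. Note also that the elcs topology here is the one induced by the single extended seminorm $\parallel\cdot\parallel$, so everything in the preceding results applies with $\tau$ taken to be this topology and $\tau_F$ its flc topology.

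Having established that $(X,\parallel\cdot\parallel)$ satisfies condition (2) of Corollary \ref{reflexivity of a fundamental elcs}, I would simply invoke that corollary to conclude that $(X,\parallel\cdot\parallel)$ is reflexive if and only if $(X,\tau_F)$ is reflexive, which is exactly the claim. (Alternatively, one could bypass Corollary \ref{reflexivity of a fundamental elcs} and argue directly: being fundamental, $\tau_{ucb}=\tau_s$ on $X^*$ by Theorem 5.12(2) in \cite{doelcs}, so semi-reflexivity of $(X,\parallel\cdot\parallel)$ is equivalent to semi-reflexivity of $(X,\tau_F)$, and then Theorem \ref{reflexivity of an elcs and finest space} gives the equivalence of reflexivity.) There is no real obstacle here — the entire content is the bookkeeping observation that an enls is automatically a fundamental elcs.
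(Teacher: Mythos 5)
Your proposal is correct and matches the paper's (implicit) reasoning: the corollary is stated as an immediate consequence of Corollary \ref{reflexivity of a fundamental elcs}, precisely because an enls is a fundamental elcs with $X_{fin}=X_{fin}^{\parallel\cdot\parallel}$ open. Nothing further is needed.
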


%If $\rho$ is a continuous extended seminorm on an elcs $(X, \tau)$, then $X_{fin}^\rho$ is a clopen subspace of $X$.
Our next theorem relates the reflexivity of an elcs $(X,\tau)$ with the reflexivity of its open subspaces.
\begin{proposition} Suppose $M$ is an open subspace of an elcs $(X, \tau)$. Then there exists a continuous extended seminorm $\rho$ on $(X, \tau)$ such that $X_{fin}^\rho=M$.\end{proposition}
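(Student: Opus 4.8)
The plan is to take for $\rho$ the Minkowski functional $\mu_M$ of the open subspace $M$ itself, and to check that this single choice already does everything required. The first step is to observe that an open subspace automatically qualifies as an absolutely convex neighborhood of $0$: being a linear subspace, $M$ is balanced and convex, hence absolutely convex, and since $0 \in M$ and $M$ is open, $M$ is a neighborhood of $0$ in $(X,\tau)$. (One might instead be tempted to start from some absolutely convex neighborhood $U \subseteq M$ and extend $\mu_U$ by $\infty$ off $M$, but this fails in general because $\mu_U$ can already be infinite on points of $M$ not absorbed by $U$; the point is that $M$ is precisely the absolutely convex neighborhood of $0$ whose absorbed subspace $\bigcup_{\lambda>0}\lambda U$ is exactly $M$.)

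The second step is to invoke the basic properties of the Minkowski functional recorded after Definition \ref{Minkowski funcitonal} --- in particular that $\mu_U$ is a continuous extended seminorm on $X$ exactly when $U$ is a neighborhood of $0$ --- to conclude that $\rho := \mu_M$ is a continuous extended seminorm on $(X,\tau)$. Alternatively one could verify continuity of $\mu_M$ by hand from the fact that an open subspace is clopen (its cosets partition $X$ into open sets), but appealing to the criterion already in the preliminaries is cleaner.

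The final step is to compute $X_{fin}^\rho$. The key observation is that since $M$ is a subspace, $\lambda M = M$ for every $\lambda > 0$, so for each $x \in X$,
$$\mu_M(x) = \inf\{\lambda > 0 : x \in \lambda M\} = \inf\{\lambda > 0 : x \in M\},$$
which equals $0$ when $x \in M$ and, using the convention $\inf\emptyset = \infty$, equals $\infty$ when $x \notin M$. Hence $X_{fin}^\rho = \{x \in X : \mu_M(x) < \infty\} = M$, as desired. There is no genuine obstacle here; the only points needing care are the two sanity checks above --- that $M$ is an absolutely convex neighborhood of $0$ so that the cited continuity criterion applies, and that the scaling identity $\lambda M = M$ is what forces $\mu_M$ to take only the values $0$ and $\infty$, with $\infty$ occurring exactly on the complement of $M$.
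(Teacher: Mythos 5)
Your proposal is correct and is essentially the paper's own argument: since $\lambda M=M$ for every $\lambda>0$, your $\rho=\mu_M$ is exactly the extended seminorm the paper writes down directly (equal to $0$ on $M$ and $\infty$ off $M$), and the computation $X_{fin}^\rho=M$ is the same. The only difference is the continuity check, where the paper dominates $\rho$ by a continuous extended seminorm $\mu$ with $X_{fin}^\mu\subseteq M$, while you invoke the recorded fact that the Minkowski functional of an absolutely convex neighborhood of $0$ is continuous; both justifications are valid.
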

\begin{proof} Since $M$ is an open subspace of $(X, \tau)$, there exists a continuous extended seminorm $\mu$ on $(X, \tau)$ such that $X_{fin}^\mu\subseteq M$. Define an extended seminorm $\rho$ on $X$ by \[ \rho(x)=
	\begin{cases}
		\text{0;} &\quad\text{if $x\in M$}\\
		\text{$\infty$;} &\quad\text{if $x\notin M.$ }\\
	\end{cases}\] Note that $\rho(x)\leq \mu(x)$ for every $x\in X$. Consequently, $\rho$ is continuous on $(X, \tau)$. It is easy to see that $X_{fin}^\rho=M$.  \end{proof}

We need the following remark and lemma in the proof of Theorem \ref{reflexivity of X with finite subspaces}. 

\begin{remark}\label{reflexivity of M}
Suppose $\rho$ is a continuous extended seminorm on an elcs $(X, \tau)$ and $M$ is a subspace of $X$ with $X=X_{fin}^\rho\oplus M$. Then $(M, \tau|_M)$ is a discrete space. Therefore $\tau|_M$ is induced by the discrete extended  norm $\parallel\cdot\parallel_{0, \infty}$. If $\tau_F$ is the flc topology for $(X, \tau)$, then by Theorem 4.1 in \cite{flctopology}, the flc topology for $(M, \tau|_M)$ is $\tau_F|_M$. By Example \ref{basic examples of a reflexive elcs} and Corollary \ref{reflexivity of an enls}, both the spaces $(M, \tau|_M)$ and $(M, \tau_F|_M)$ are reflexive.\end{remark}
 
 \begin{lemma}\label{X is isomorphic to finite subspace and M}
  Let $(X, \tau)$ be an elcs and let $\rho$ be a continuous extended seminorm on $(X, \tau)$. If $M$ is a subspace of $X$ with $X=X_{fin}^\rho\oplus M$ and $\tau_F$ is the flc topology for $(X, \tau)$, then  $(X, \tau_F)$ is isomorphic to the product space $\left(X_{fin}^\rho, \tau_F|_{X_{fin}^\rho}\right)\times \left( M, \tau_F|_M \right)$.\end{lemma}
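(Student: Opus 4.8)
The plan is to realize the same splitting first for the original topology $\tau$ and then transport it through the flc construction. Write $p\colon X\to X_{fin}^\rho$ and $q\colon X\to M$ for the linear projections determined by $X=X_{fin}^\rho\oplus M$, and let $T\colon X_{fin}^\rho\times M\to X$ be the linear bijection $T(y,m)=y+m$, so that $T^{-1}=(p,q)$. I would show that $T$ is a linear homeomorphism at the level of $\tau$, and then invoke the fact that the flc construction is compatible with continuous linear maps and with finite products.

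First I would check that $T$ is a linear homeomorphism from $\left(X_{fin}^\rho,\tau|_{X_{fin}^\rho}\right)\times\left(M,\tau|_M\right)$ onto $(X,\tau)$. Continuity of $T$ is clear, since addition on $X$ and the inclusions $X_{fin}^\rho\hookrightarrow X$ and $M\hookrightarrow X$ are continuous. For $T^{-1}=(p,q)$, recall that $X_{fin}^\rho$ is a clopen subspace of $(X,\tau)$ and that $\tau|_M$ is the discrete topology (Remark~\ref{reflexivity of M}). Since $(X,+)$ is a topological group, each coset $m+X_{fin}^\rho$ with $m\in M$ is clopen, so $q^{-1}(V)=\bigcup_{m\in V}\left(m+X_{fin}^\rho\right)$ is open for every $V\subseteq M$; hence $q\colon(X,\tau)\to(M,\tau|_M)$ is continuous. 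Then $x\mapsto x-q(x)$ is a continuous map $(X,\tau)\to(X,\tau)$ whose image lies in $X_{fin}^\rho$, so $p\colon(X,\tau)\to\left(X_{fin}^\rho,\tau|_{X_{fin}^\rho}\right)$ is continuous. Thus $(X,\tau)$ is isomorphic, as an elcs, to $\left(X_{fin}^\rho,\tau|_{X_{fin}^\rho}\right)\times\left(M,\tau|_M\right)$.

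Next I would pass to the flc topologies. The key observation is that the flc construction respects continuous linear maps: if $S\colon(A,\alpha)\to(B,\beta)$ is a continuous linear map of elcs and $q$ is a $\beta$-continuous seminorm on $B$, then $q\circ S$ is an $\alpha$-continuous seminorm on $A$, hence $\alpha_F$-continuous by Theorem~3.5 in \cite{flctopology}; therefore $S\colon(A,\alpha_F)\to(B,\beta_F)$ is continuous. Applying this to $T$ and $T^{-1}$ (the domain being an elcs as a product of two elcs) shows that $T$ is also a linear homeomorphism $\left(X_{fin}^\rho\times M,\ (\tau|_{X_{fin}^\rho}\times\tau|_M)_F\right)\to(X,\tau_F)$, where $(\,\cdot\,)_F$ denotes the flc topology of the indicated elcs. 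Applying the same observation to the coordinate inclusions and projections of $X_{fin}^\rho\times M$ shows that the flc topology of a product is the product of the flc topologies, so $(\tau|_{X_{fin}^\rho}\times\tau|_M)_F=(\tau|_{X_{fin}^\rho})_F\times(\tau|_M)_F$; and by Theorem~4.1 in \cite{flctopology}, applied to the subspaces $X_{fin}^\rho$ and $M$, these are $\tau_F|_{X_{fin}^\rho}$ and $\tau_F|_M$. Combining the pieces gives $(X,\tau_F)\cong\left(X_{fin}^\rho,\tau_F|_{X_{fin}^\rho}\right)\times\left(M,\tau_F|_M\right)$.

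The two topological-group arguments of the second paragraph and the compatibility of the flc construction with continuous linear maps are routine. The one point I would write out carefully is the identity $(\sigma\times\eta)_F=\sigma_F\times\eta_F$ for elcs $(A,\sigma)$ and $(B,\eta)$: the inclusion $\sigma_F\times\eta_F\subseteq(\sigma\times\eta)_F$ is immediate from the maximality of $(\sigma\times\eta)_F$ among locally convex topologies coarser than $\sigma\times\eta$, while the reverse inclusion amounts to observing that a $(\sigma\times\eta)$-continuous seminorm $r$ on the product satisfies $r(a,b)\le r(a,0)+r(0,b)$, with $r(\,\cdot\,,0)$ and $r(0,\,\cdot\,)$ continuous on the respective factors (hence continuous for $\sigma_F$, resp. $\eta_F$), so every basic $(\sigma\times\eta)_F$-neighborhood of $0$ contains a $\sigma_F\times\eta_F$-neighborhood of $0$.
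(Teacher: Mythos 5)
Your argument is correct, but it reaches the lemma by a more structural route than the paper. The paper works directly with the flc topologies: it takes the map $\Psi(x)=(x_f,x_m)$, and for continuity it picks continuous seminorms $\mu,\nu$ on $\left(X_{fin}^\rho,\tau_F|_{X_{fin}^\rho}\right)$ and $\left(M,\tau_F|_M\right)$, glues them into $\lambda(x)=\mu(x_f)+\nu(x_m)$, observes that $\lambda$ is $\tau$-continuous because $X_{fin}^\rho$ is open and $\lambda=\mu$ there, and then invokes Theorem 3.5 of \cite{flctopology}; continuity of $\Psi^{-1}$ is the triangle inequality for a $\tau_F$-continuous seminorm. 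You instead first prove that $(X,\tau)$ itself is the topological direct sum $\left(X_{fin}^\rho,\tau|_{X_{fin}^\rho}\right)\times\left(M,\tau|_M\right)$ (using clopenness of $X_{fin}^\rho$ and discreteness of $M$), and then transport this through three general facts: functoriality of the flc construction under continuous linear maps, compatibility of the flc construction with finite products, and the identification of the flc topology of a subspace with the restricted flc topology. All three are sound, and your write-up of the product identity $(\sigma\times\eta)_F=\sigma_F\times\eta_F$ is exactly the point that needed care; the net effect is a more modular proof whose lemmas (functoriality, products) are reusable elsewhere, at the cost of being longer than the paper's direct two-seminorm computation, in which your subspace and product steps are effectively fused into the single seminorm $\lambda$. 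The only point you should verify is the scope of Theorem 4.1 in \cite{flctopology}: the paper invokes it (in Remark \ref{reflexivity of M}) only for the discrete complement $M$, and you additionally apply it to the open subspace $X_{fin}^\rho$. If that theorem's hypotheses do not literally cover $X_{fin}^\rho$, the gap is closed in one line by the same trick the paper uses: a continuous seminorm $\mu$ on $\left(X_{fin}^\rho,\tau|_{X_{fin}^\rho}\right)$ extends to the $\tau$-continuous seminorm $x\mapsto\mu(x_f)$ on $X$ (continuity at $0$ follows from openness of $X_{fin}^\rho$), which together with Theorem 3.5 of \cite{flctopology} gives $\left(\tau|_{X_{fin}^\rho}\right)_F=\tau_F|_{X_{fin}^\rho}$.
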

\begin{proof}
	Consider the map $\Psi: (X, \tau_F) \to \left(X_{fin}^\rho, \tau_F|_{X_{fin}^\rho}\right)\times \left( M, \tau_F|_M \right)$ defined by $\Psi(x=x_f+x_m)=(x_f, x_m)$. Then $\Psi$ is linear and bijective. Note that  if $U$ and $V$ are neighborhoods of $0$ in $\left(X_{fin}^\rho, \tau_F|_{X_{fin}^\rho}\right)\text{ and } \left( M, \tau_F|_M \right)$, respectively, then there exist continuous seminorms $\mu$ and $\nu$ on $\left(X_{fin}^\rho, \tau_F|_{X_{fin}^\rho}\right)\text{ and } \left( M, \tau_F|_M \right)$ such that $\mu^{-1}([0, 1))\subseteq U$ and $\nu^{-1}([0, 1))\subseteq V$. It is easy to see that $\lambda(x)=\mu(x_f)+\nu(x_m)$ for $x\in X$ and $x=x_f+x_m$ is a continuous seminorm on $(X, \tau)$ as $X_{fin}^\rho$ is an open subspace of $(X, \tau)$ and $\lambda=\mu$ on $X_{fin}^\rho$. By Theorem 3.5 in \cite{flctopology}, $\lambda$ is continuous on $(X, \tau_F)$. Note that $\Psi(\lambda^{-1}([0, 1)))\subseteq U\times V$. Therefore $\Psi$ is continuous. 
	
  For the continuity of  $\Psi^{-1}$, let $\rho$ be a continuous seminorm on $(X, \tau_{F})$. Then $\rho|_{X_{fin}^\rho}$ and $\rho|_M$ are continuous on $(X_{fin}^\rho, \tau_F|_{X_{fin}^\rho})$ and $(M, \tau_F|_M)$, respectively. Note that $$\Psi^{-1}\left(\rho|^{-1}_{X_{fin}^\rho}\left( \left[ 0,\frac{1}{2}\right) \right) \times \rho|^{-1}_M\left( \left[ 0,\frac{1}{2}\right) \right) \right)\subseteq \rho^{-1}([0, 1)).$$ Therefore $\Psi^{-1}$ is continuous.  \end{proof}

\begin{theorem}\label{reflexivity of X with finite subspaces}
	Suppose $(X, \tau)$ is an elcs with the flc topology $\tau_{F}$. Then the following statements are equivalent.
	\begin{itemize}
		\item[(1)] $\left(X, \tau \right)$ is reflexive;
		\item[(2)] $\left(X_{fin}^\rho, \tau|_{X_{fin}^\rho} \right)$ is reflexive, for every continuous extended seminorm $\rho$ on $X$;
		\item[(3)] $\left(X_{fin}^\rho, \tau|_{X_{fin}^\rho} \right)$ is reflexive, for some continuous extended seminorm $\rho$ on $X$.	\end{itemize}\end{theorem}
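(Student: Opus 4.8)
The plan is to reduce everything to a single equivalence: for \emph{each} continuous extended seminorm $\rho$ on $X$, the space $(X,\tau)$ is reflexive if and only if $\bigl(X_{fin}^\rho,\tau|_{X_{fin}^\rho}\bigr)$ is reflexive. Granting this, (1)$\Leftrightarrow$(2) follows by applying it to every $\rho$, the implication (2)$\Rightarrow$(3) is trivial, and (3)$\Rightarrow$(1) follows by applying it to the given $\rho$.

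To prove the equivalence, fix $\rho$ and an algebraic complement $M$ of $X_{fin}^\rho$, so that $X=X_{fin}^\rho\oplus M$. Since $X_{fin}^\rho$ is a clopen subspace of $(X,\tau)$, its subspace-topology neighbourhoods of $0$ are neighbourhoods of $0$ in $(X,\tau)$; and since $M\cap X_{fin}^\rho=\{0\}$ is open in $M$, the space $(M,\tau|_M)$ is discrete. I claim the linear bijection $\Psi\colon(X,\tau)\to\bigl(X_{fin}^\rho,\tau|_{X_{fin}^\rho}\bigr)\times(M,\tau|_M)$ given by $\Psi(x_f+x_m)=(x_f,x_m)$ is a homeomorphism. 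For continuity of $\Psi$: a net converging to $0$ in $(X,\tau)$ eventually lies in the open set $X_{fin}^\rho$, which forces its $M$-component to be eventually $0$, so both components converge. For continuity of $\Psi^{-1}$: a net converging to $0$ in the product has its $M$-component eventually $0$ (as $M$ is discrete) and its $X_{fin}^\rho$-component converging to $0$ in $(X,\tau)$ through the continuous inclusion. This is the $\tau$-level counterpart of Lemma~\ref{X is isomorphic to finite subspace and M}, and by Remark~\ref{reflexivity of M} the factor $(M,\tau|_M)$ is reflexive.

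It then remains to verify that reflexivity of an elcs is preserved and reflected by finite products. A subset of a product of two elcs is bounded exactly when both of its projections are bounded (projections are continuous, so they carry bounded sets to bounded sets, and a product of bounded sets is bounded by using a base of balanced neighbourhoods of $0$, a common dilation factor, and the product of the two finite translating sets). Hence, under the canonical identification $(Y\times Z)^*\cong Y^*\times Z^*$, the topology $\tau_{ucb}$ on $(Y\times Z)^*$ is the product of $\tau_{ucb}$ on $Y^*$ and $\tau_{ucb}$ on $Z^*$. Applying the same fact to $(Y^*,\tau_{ucb})\times(Z^*,\tau_{ucb})$ identifies $\bigl((Y\times Z)^*,\tau_{ucb}\bigr)^*$, with its topology of uniform convergence on bounded sets, with the product of $\bigl(Y^*,\tau_{ucb}\bigr)^*$ and $\bigl(Z^*,\tau_{ucb}\bigr)^*$ carrying their respective such topologies; and under this identification the canonical map $J_{Y\times Z}$ becomes $J_Y\times J_Z$. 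Since a product of maps is surjective (respectively continuous) precisely when each factor is, $Y\times Z$ is reflexive if and only if $Y$ and $Z$ are both reflexive. Taking $Y=\bigl(X_{fin}^\rho,\tau|_{X_{fin}^\rho}\bigr)$, $Z=(M,\tau|_M)$, and using that $Z$ is reflexive yields the desired equivalence.

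The routine but delicate point is the last paragraph: one must thread the elcs notions of bounded set and of $\tau_{ucb}$ through the product identification at both the dual and the bidual level and check that the canonical maps match up. By contrast, the homeomorphism $\Psi$ is elementary once one exploits that $X_{fin}^\rho$ is open, and the logical reduction in the first paragraph is immediate.
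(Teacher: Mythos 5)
Your proposal is correct, but it takes a genuinely different route from the paper's proof. The paper never establishes a product-duality statement at the level of $\tau$: for (1)$\Rightarrow$(2) it extends a functional $\psi$ on $((X_{fin}^\rho)^*,\tau_{ucb}^\rho)$ to $X^*$, represents it via reflexivity of $X$, and gets continuity of the canonical map on $X_{fin}^\rho$ from barreledness of $(X,\tau_F)$ (Proposition \ref{reflexive of an elcs imply barrelledness of finest space}) together with uniform-boundedness/equicontinuity theorems for the flc topology; for (3)$\Rightarrow$(1) it again works with zero-extensions of functionals and invokes Lemma \ref{X is isomorphic to finite subspace and M}, which identifies $(X,\tau_F)$ --- not $(X,\tau)$ --- with the product, to deduce barreledness of $(X,\tau_F)$ and hence continuity of $J$. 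You instead prove the stronger $\tau$-level isomorphism $(X,\tau)\cong(X_{fin}^\rho,\tau|_{X_{fin}^\rho})\times(M,\tau|_M)$ (which is valid: $X_{fin}^\rho$ is clopen, $M$ is discrete, and your net argument at $0$ plus translation invariance gives continuity both ways), and a general lemma that a finite product of elcs is reflexive if and only if each factor is; combined with the reflexivity of the discrete factor $M$ (Remark \ref{reflexivity of M}), this yields all three equivalences in one stroke. What your route buys is a reusable structural lemma and independence from the barreledness machinery; what it costs is that the ``routine but delicate'' step is exactly where the content lies, and it does require the polar bookkeeping you allude to: $(B\times C)^\circ$ is not $B^\circ\times C^\circ$ because of the cross term $|f(y)+g(z)|$, so one should either polarize $(B\times\{0\})\cup(\{0\}\times C)$, whose polar is exactly $B^\circ\times C^\circ$, or insert a factor $\tfrac{1}{2}$; with that, the identifications $(Y\times Z)^*\cong Y^*\times Z^*$ with $\tau_{ucb}$ equal to the product topology, the analogous identification at the bidual level (where the factors are honest locally convex spaces), and $J_{Y\times Z}=J_Y\times J_Z$ all check out, so your argument is sound.
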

	\begin{proof}
		(1)$\Rightarrow$(2). Suppose $\rho$ is a continuous extended seminorm on $(X, \tau)$ and $\psi$ is a continuous linear functional on $\left( \left( X_{fin}^\rho\right)^*, \tau_{ucb}^\rho\right) $, where $\tau_{ucb}^\rho$ is the topology of uniform convergence on bounded subsets of $\left(X_{fin}^\rho, \tau|_{X_{fin}^\rho} \right)$. Define a linear functional $\Psi$ on $X^*$ by $\Psi(f)=\psi(f|_{X_{fin}^\rho})$ for $f\in X^*$. Suppose $(f_\alpha)$ is a net in $(X^*, \tau_{ucb})$ converging to $0$. Then $f_\alpha|_{X_{fin}^\rho}\to 0$ in $\left( \left( X_{fin}^\rho\right)^*, \tau_{ucb}^\rho\right)$ as every bounded subset of $\left(X_{fin}^\rho, \tau|_{X_{fin}^\rho} \right)$ is also bounded in $(X, \tau)$. Consequently, $\Psi(f_\alpha)=\psi(f_\alpha|_{X_{fin}^ \rho})\to 0$. Thus $\Psi\in (X^*, \tau_{ucb})^*$. Since $\left(X, \tau \right)$ is reflexive, there exists an $x_0\in X$ such that $\Psi=J_{x_0}$. If $x_0\notin X_{fin}^\rho$, then there exists an $f\in X^*$ such that $f(x_0)\neq 0$ and $f(X_{fin}^\rho)=0$. Then $0\neq J_{x_0}(f)=f(x_0)=\Psi(f)=\psi(f|_{X_{fin}^\rho})=0$. So $x_0\in X_{fin}^\rho$. For every $f\in \left( X_{fin}^\rho, \tau|_{X_{fin}^\rho}\right)^*$, $\psi(f) = \Psi(f') = J_{x_0}(f') =  f'(x_0) = f(x_0)$, where $f'$ is a continuous linear extension of $f$ on $X$ which is possible by Corollary 4.2 in \cite{flctopology}. Hence  $\left(X_{fin}^\rho, \tau|_{X_{fin}^\rho} \right)$ is semi-reflexive.
		 
		To complete the proof it is enough to show that the canonical map  $J_\rho: \left(X_{fin}^\rho, \tau|_{X_{fin}^\rho} \right)\to \left( \left( X_{fin}^\rho\right)^*, \tau_{ucb}^\rho\right)^*$ on $X_{fin}^\rho$ is continuous. Let $M$ be a subspace of $X$ such that $X=X_{fin}^\rho\oplus M$. Suppose $D$ is any bounded set in $\left(\left( X_{fin}^\rho\right)^*, \tau_{ucb}^\rho\right)$. Consider $Z=\{f' : f \in D\}$, where $f'$ is the continuous linear extension of $f$ on $X$ which is $0$ on $M$. Then $Z$ is pointwise bounded. Since $\left(X, \tau \right)$ is reflexive, by Proposition \ref{reflexive of an elcs imply barrelledness of finest space}, $(X, \tau_F)$ is barreled. By Theorem 11.3.4 and Theorem 11.3.5, p. 384 in \cite{tvsnarici}, $Z$ is bounded in $(X^*, \tau_s)$. So $Z$ is bounded in $(X^*, \tau_{ucb})$.  Consequently, $Z^\circ$ is a neighborhood of $0$ in $(X^*, \tau_{ucb})^*$. Thus $J^{-1}\left(Z^\circ \right)$ is a neighborhood of  $0$ in $(X, \tau)$ as  $\left(X, \tau \right)$ is reflexive. Therefore $J^{-1}\left(Z^\circ \right)\cap X_{fin}^\rho$ is a neighborhood of $0$ in $\left(X_{fin}^\rho, \tau|_{X_{fin}^\rho} \right)$. Note that $J^{-1}\left( Z^\circ\right)\cap X_{fin}^\rho=J_\rho^{-1}(D^\circ)$ (if $x\in X_{fin}^\rho$, then $|f(x)|\leq 1~ \text{for every}~ f\in D\iff |g(x)|\leq 1 ~\text{for every}~ g\in Z$). Which implies that $J_\rho^{-1}(D^\circ)$  is a neighborhood of $0$ in $\left(X_{fin}^\rho, \tau|_{X_{fin}^\rho} \right)$. Therefore $J_\rho$ is continuous. Hence $\left(X_{fin}^\rho, \tau|_{X_{fin}^\rho} \right)$ is reflexive.  \\
		
		The implication (2)$\Rightarrow$(3) is obvious. 
		
		(3)$\Rightarrow$(1). Let $(3)$ hold for some continuous extended seminorm $\rho$ on $(X, \tau)$ and let $M$ be a subspace of $X$ such that $X=X_{fin}^\rho\oplus M$. Suppose $\Psi$ is a continuous linear functional on $(X^*, \tau_{ucb})$. Define  linear functionals $\Psi_1$ and $\Psi_2$ on $\left(X_{fin}^\rho, \tau|_{X_{fin}^\rho} \right)^*$ and $(M, \tau|_M)^*$, respectively,  by $\Psi_1(f)= \Psi(\hat{f})$  for $f\in\left(X_{fin}^\rho, \tau|_{X_{fin}^\rho} \right)^*$ and $\Psi_2(g)= \Psi(g')$ for $g\in (M, \tau|_M)^*$, where  \begin{alignat*}{2}
		\hat{f}(x)&=\left\{
		\begin{array}{lll}
		f(x) & \textnormal{if} & x\in X_{fin}^\rho \\
		0 & \textnormal{if} & x\in M
		\end{array}
		\right.
		&\qquad
		g'(x)&=\left\{
		\begin{array}{lll}
		0 & \textnormal{if} & x\in X_{fin}^\rho \\
		g(x) & \textnormal{if} & x\in M.
		\end{array}
		\right.	\end{alignat*}
		 It is easy to see that if nets $f_\alpha\to 0$ in $\left( \left( X_{fin}^\rho\right)^*, \tau_{ucb}^\rho\right)$ and $g_\beta\to 0$ in $\left( M^*, \tau_{ucb}^m\right)$, then $\hat{f}_\alpha\to 0$, $g_\beta'\to 0$ in $(X^*, \tau_{ucb})$, where $\tau_{ucb}^\rho$ and $\tau_{ucb}^m$ are the topologies of the uniform convergence on bounded subsets of $(X_{fin}^\rho, \tau|_{X_{fin}^\rho})$ and $(M, \tau|_M)$, respectively. Since $\Psi\in (X^*, \tau_{ucb})^*$, we have $\Psi_1\in\left( \left( X_{fin}^\rho\right)^*, \tau_{ucb}^\rho\right)^*$ and $\Psi_2\in \left( M^*, \tau_{ucb}^m\right)^*$. So there exists $x_f\in X_{fin}^\rho$ such that $\Psi_1(f)=f(x_f)$ for all $f \in (X_{fin}^\rho, \tau|_{X_{fin}^\rho})^*$ as $\left(X_{fin}^\rho, \tau|_{X_{fin}^\rho} \right)$ is semi-reflexive. By Remark \ref{reflexivity of M}, $M$ is also semi-reflexive. There exists $x_m\in M$ such that $\Psi_2(g) = g(x_m)$ for all $g \in (M, \tau|_M)^*$. Note that for every $f \in X^*$, $\Psi(f=\widehat{f|}_{X_{fin}^\rho}+f|_M')=\Psi(\widehat{f|}_{X_{fin}^\rho})+\Psi(f|_M')=\Psi_1(f|_{X_{fin}^\rho})+\Psi_2(f|_M)= f|_{X_{fin}^\rho}(x_f) + f|_M(x_m) = f(x_f+x_m) = J_{x_f+x_m}(f)$. Therefore $(X, \tau)$ is semi-reflexive.  
		 
		 Let $D$ be a bounded set in $(X^*, \tau_{ucb})$. Then it is pointwise bounded. Since both $\left( X_{fin}^\rho, \tau|_{X_{fin}^\rho}\right) $ and $(M, \tau|_M)$ are reflexive, by Theorem \ref{reflexivity of an elcs and finest space}, both the spaces $\left( X_{fin}^\rho, \tau_F|_{X_{fin}^\rho}\right) $ and $(M, \tau_F|_M)$ are barreled. By Lemma \ref{X is isomorphic to finite subspace and M} and  Theorem 11.12.4, p. 409 in \cite{tvsnarici}, $(X, \tau_F)$ is barreled. Therefore by Theorem 11.3.4, p. 384 in \cite{tvsnarici}, $D$ is equicontinuous on $(X, \tau_F)$. Consequently, $D_\circ$ is a neighborhood of $0$ in $(X, \tau_F)$. Note that $J^{-1}(D^\circ)= \{x\in X: |f(x)|\leq 1 ~\text{for}~ f\in D\}=D_\circ$. So $J^{-1}(D^\circ)$ is a neighborhood of $0$ in $(X, \tau)$. Hence $J$ is continuous and $(X, \tau)$ is reflexive. \end{proof}
	 
	 \begin{remark} A similar result holds if we replace reflexive by semi-reflexive in the statement of Theorem \ref{reflexivity of X with finite subspaces}. \end{remark}
	 
\begin{corollary}\label{reflexivity of finite subspace in an enls}
	Let $(X, \parallel\cdot\parallel)$ be an enls. Then $(X, \parallel\cdot\parallel)$ is reflexive if and only if $(X_{fin}, \parallel\cdot\parallel)$ is reflexive.\end{corollary}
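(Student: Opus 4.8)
The plan is to obtain this as the special case of Theorem \ref{reflexivity of X with finite subspaces} in which the continuous extended seminorm is taken to be the extended norm itself. Throughout, write $\tau$ for the topology on $X$ induced by $\parallel\cdot\parallel$, so that the enls $(X,\parallel\cdot\parallel)$ is precisely the elcs $(X,\tau)$ whose topology is induced by the single-element family $\{\rho\}$ with $\rho:=\parallel\cdot\parallel$, and the notions of reflexivity and semi-reflexivity of the enls are by definition those of this elcs.

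First I would record the two identifications that align the present statement with Theorem \ref{reflexivity of X with finite subspaces}. Since $\tau$ is induced by $\{\rho\}$, item (3) in the list of properties of the finite subspace gives $X_{fin}=\bigcap_{\mu\in\{\rho\}}X_{fin}^\mu=X_{fin}^\rho$; and the subspace topology $\tau|_{X_{fin}}$ is exactly the topology induced on $X_{fin}$ by the restriction of $\parallel\cdot\parallel$, which is now finite-valued and hence a genuine norm, so $\bigl(X_{fin}^\rho,\tau|_{X_{fin}^\rho}\bigr)$ and the normed linear space $(X_{fin},\parallel\cdot\parallel)$ coincide as (extended) locally convex spaces. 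Granting these, the implication $(1)\Rightarrow(2)$ of Theorem \ref{reflexivity of X with finite subspaces} applied to $(X,\tau)$, read off at $\rho=\parallel\cdot\parallel$, shows that reflexivity of $(X,\parallel\cdot\parallel)$ forces reflexivity of $(X_{fin},\parallel\cdot\parallel)$; conversely, if $(X_{fin},\parallel\cdot\parallel)=\bigl(X_{fin}^\rho,\tau|_{X_{fin}^\rho}\bigr)$ is reflexive then condition (3) of that theorem holds with this $\rho$, whence $(X,\tau)$ is reflexive.

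The one step I would take care to justify — the only place where something genuinely needs to be checked rather than merely named — is that reflexivity of the \emph{normed} space $(X_{fin},\parallel\cdot\parallel)$ in the sense of our Definition agrees with its classical reflexivity. For a normed space the bounded sets are exactly the norm-bounded sets, and $\rho_B$ with $B$ the closed unit ball is $\parallel\cdot\parallel_{op}$, so the topology $\tau_{ucb}$ on $X_{fin}^*$ is the operator-norm topology; hence $(X_{fin}^*,\tau_{ucb})^*$ is the Banach bidual $X_{fin}^{**}$, the canonical map $J$ is the usual isometric embedding (in particular continuous, consistently with Corollary \ref{equivalence of reflexive and semireflexive in an enls}), and ``reflexive'', ``semi-reflexive'' and ``classically reflexive'' all reduce to surjectivity of $J\colon X_{fin}\to X_{fin}^{**}$. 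Once this identification is in place, the corollary follows from Theorem \ref{reflexivity of X with finite subspaces} with no further argument; indeed the only real ``obstacle'' is bookkeeping, since all the substance sits in that theorem.
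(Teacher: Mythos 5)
Your proposal is correct and matches the paper's route: the paper states this corollary as an immediate consequence of Theorem \ref{reflexivity of X with finite subspaces}, applied with $\rho=\parallel\cdot\parallel$, using exactly the identifications $X_{fin}=X_{fin}^{\rho}$ and $\tau|_{X_{fin}}=$ the norm topology. Your extra check that elcs-reflexivity of the normed space $(X_{fin},\parallel\cdot\parallel)$ coincides with classical reflexivity (since $\tau_{ucb}$ on $X_{fin}^*$ is the operator-norm topology) is a sensible piece of bookkeeping that the paper leaves implicit.
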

\begin{corollary}\label{reflexive enls are extended banach}
	Let $(X, \parallel\cdot\parallel)$ be a reflexive enls. Then $(X, \parallel\cdot\parallel)$ is an extended Banach space. \end{corollary}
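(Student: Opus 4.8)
The plan is to reduce everything to the finite subspace and then invoke the classical fact that a reflexive normed space is complete. First I would apply Corollary \ref{reflexivity of finite subspace in an enls}: since $(X, \parallel\cdot\parallel)$ is reflexive, the normed linear space $(X_{fin}, \parallel\cdot\parallel)$ is reflexive as well. Next I would observe that for the conventional normed space $(X_{fin}, \parallel\cdot\parallel)$ the notion of reflexivity introduced in this section coincides with the usual one: its flc topology is the norm topology, the topology $\tau_{ucb}$ on $(X_{fin})^*$ is precisely the operator norm topology, and, as $(X_{fin})^*$ with this norm is again an ordinary normed space, $((X_{fin})^*, \tau_{ucb})^*$ carries the norm topology of the bidual; moreover the canonical map of a normed space into its bidual is always continuous. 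Hence reflexivity of $(X_{fin}, \parallel\cdot\parallel)$ in the sense of our definition means exactly that the canonical isometric embedding $J\colon X_{fin}\to (X_{fin})^{**}$ is surjective.

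With this identification in hand, I would recall the standard functional–analytic fact that a reflexive normed space is a Banach space: the bidual $(X_{fin})^{**}$, being the dual of a normed space, is complete, and $J$ is an isometric isomorphism of $X_{fin}$ onto $(X_{fin})^{**}$, so $X_{fin}$ is complete too. Therefore $(X_{fin}, \parallel\cdot\parallel)$ is a Banach space.

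Finally, I would invoke the characterization recorded in Section 2 — an enls $(X, \parallel\cdot\parallel)$ is an extended Banach space if and only if $(X_{fin}, \parallel\cdot\parallel)$ is a Banach space — to conclude that $(X, \parallel\cdot\parallel)$ is an extended Banach space. I do not expect any genuine obstacle here: the only point requiring care is the bookkeeping verification in the first paragraph that, on a conventional normed space, the abstract reflexivity notion of this paper agrees with the classical one, after which the result is immediate from Corollary \ref{reflexivity of finite subspace in an enls} and the classical theorem.
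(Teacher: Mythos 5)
Your proposal is correct and follows essentially the same route as the paper: reduce to $X_{fin}$ via Corollary \ref{reflexivity of finite subspace in an enls}, apply the classical fact that a reflexive normed space is a Banach space, and conclude using the characterization that an enls is an extended Banach space exactly when its finite subspace is a Banach space. Your extra bookkeeping check that the paper's notion of reflexivity agrees with the classical one on a conventional normed space is a point the paper leaves implicit, but it does not change the argument.
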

\begin{proof}
	If $(X, \parallel\cdot\parallel)$ is reflexive, then $(X_{fin}, \parallel\cdot\parallel)$ is reflexive. By classical theory for normed space, $(X_{fin}, \parallel\cdot\parallel)$ is a Banach space. Hence by Proposition 3.11 in  \cite{nwiv}, $(X, \parallel\cdot\parallel)$ is an extended Banach space.\end{proof}

\noindent Recall that for an elcs $(X, \tau)$ with the dual $X^*$, the \textit{weak topology} $\tau_w$ on $X$ is the locally convex space topology induced by the collection $\mathcal{P}_w=\{\rho_f:f\in X^*\}$ of seminorms on $X$, where $\rho_f(x)=|f(x)|$ for every $x\in X$. It is easy to prove that the weak topologies corresponding to $(X, \tau)$ and $(X, \tau_F)$ are same.  

\begin{theorem}\label{reflivity and weak compactness of closed unit ball}
	Let $(X,\parallel\cdot\parallel)$ be an extended Banach space with the flc topology $\tau_F$. Then the following assertions are equivalent:
		\begin{itemize}
			\item[(1)] $(X, \parallel\cdot\parallel)$ is reflexive;
			\item[(2)] $(X, \tau_F)$ is reflexive;
			\item[(3)] $(X_{fin}, \parallel\cdot\parallel)$ is reflexive;
			\item[(4)] the closed unit ball $B_X$ is weakly compact;
			\item[(5)] the weak topology has the Heine-Borel property.\end{itemize}\end{theorem}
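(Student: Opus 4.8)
The plan is to prove the five statements equivalent through the chain $(1)\Leftrightarrow(2)$, $(1)\Leftrightarrow(3)$, $(3)\Leftrightarrow(4)$, $(4)\Leftrightarrow(5)$, of which the first two are immediate: $(1)\Leftrightarrow(2)$ is Corollary \ref{reflexivity of an enls} and $(1)\Leftrightarrow(3)$ is Corollary \ref{reflexivity of finite subspace in an enls}. So the real work is $(3)\Leftrightarrow(4)$ and $(4)\Leftrightarrow(5)$. Throughout I will use that $(X_{fin},\parallel\cdot\parallel)$ is a Banach space (since $(X,\parallel\cdot\parallel)$ is an extended Banach space) and that $B_X=\{x\in X:\parallel x\parallel\le1\}$ is contained in $X_{fin}$, where it coincides with the closed unit ball $B_{X_{fin}}$ of the normed space $X_{fin}$. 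For $(3)\Leftrightarrow(4)$, the key point is that $\tau_w|_{X_{fin}}$ is exactly the weak topology $\sigma(X_{fin},X_{fin}^*)$ of the normed space $X_{fin}$: the restriction map $X^*\to X_{fin}^*$, $f\mapsto f|_{X_{fin}}$, is onto, because any $g\in X_{fin}^*$ extends by $0$ on an algebraic complement of $X_{fin}$ to a linear functional on $X$ whose restriction to $X_{fin}$ is $g$, hence a member of $X^*$; thus the two generating families of seminorms on $X_{fin}$ agree. Since $B_X\subseteq X_{fin}$, the ball $B_X$ is weakly compact in $(X,\tau_w)$ if and only if $B_{X_{fin}}$ is weakly compact in $(X_{fin},\sigma(X_{fin},X_{fin}^*))$, which by the classical Kakutani theorem is equivalent to reflexivity of the Banach space $X_{fin}$; this yields $(3)\Leftrightarrow(4)$.

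For $(4)\Rightarrow(5)$ I would first realize the weak topology of $X$ as a product. Fix an algebraic complement $M$ of $X_{fin}$, so $X=X_{fin}\oplus M$. An enls is a fundamental elcs (take the continuous extended seminorm $\rho=\parallel\cdot\parallel$, for which $X_{fin}^\rho=X_{fin}$), so Lemma \ref{X is isomorphic to finite subspace and M} applies; passing to weak topologies (the weak topology of a finite product of locally convex spaces is the product of the weak topologies, and the weak topologies of $(X,\tau)$ and $(X,\tau_F)$ agree) identifies $(X,\tau_w)$ with $(X_{fin},\sigma(X_{fin},X_{fin}^*))\times(M,\sigma(M,M'))$, where, by Remark \ref{reflexivity of M}, $(M,\tau|_M)$ is discrete, so that $M^*$ equals the full algebraic dual $M'$ of $M$. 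Now $(X_{fin},\sigma(X_{fin},X_{fin}^*))$ has the Heine-Borel property because $X_{fin}$ is reflexive: a weakly bounded set is norm bounded by the uniform boundedness principle, hence lies in $rB_{X_{fin}}$ for some $r>0$, which is weakly compact. And $(M,\sigma(M,M'))$ has the Heine-Borel property because a $\sigma(M,M')$-bounded set $S$ must lie in a finite-dimensional subspace $N$ of $M$ --- otherwise $S$ would contain an infinite linearly independent sequence, on which one could define an unbounded element of $M'$ --- and $\sigma(M,M')$ restricts to the Euclidean topology on $N$, so $S$ is relatively compact there. Since a finite product of spaces with the Heine-Borel property again has it (project a weakly closed bounded set onto each factor, pass to closures, and apply Tychonoff), $(X,\tau_w)$ has the Heine-Borel property, which is $(5)$.

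For $(5)\Rightarrow(4)$ I would note that $B_X$ is bounded in $(X,\tau_w)$ (being norm bounded, while $\tau_w$ is coarser than the norm topology) and weakly closed, since $B_X=\bigcap\{\,x\in X:|f(x)|\le1\,\}$, the intersection running over all $f\in X^*$ with $\parallel f\parallel_{op}\le1$; this rests on $\parallel x\parallel=\sup\{|f(x)|:\parallel f\parallel_{op}\le1\}$, which is the Hahn-Banach theorem in $X_{fin}$ when $x\in X_{fin}$, and which for $x\notin X_{fin}$ holds because one can choose $f$ vanishing on $X_{fin}$ with $|f(x)|$ arbitrarily large, making both sides infinite. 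By $(5)$, $B_X$ is then weakly compact, giving $(4)$. The step I expect to be the main obstacle is $(4)\Rightarrow(5)$ on the discrete part $M$: decomposing the weak topology of $X$ as a product via Lemma \ref{X is isomorphic to finite subspace and M} and checking that inside $(M,\sigma(M,M'))$ the bounded sets are finite dimensional and relatively compact. Everything else reduces to the two corollaries already proved, this product decomposition, and the classical theory of reflexive Banach spaces.
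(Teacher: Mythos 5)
Your proposal is correct, and three of your four links coincide with the paper's: (1)$\Leftrightarrow$(2) and (1)$\Leftrightarrow$(3) are quoted from Corollaries \ref{reflexivity of an enls} and \ref{reflexivity of finite subspace in an enls} exactly as in the paper, and your (3)$\Leftrightarrow$(4) is the paper's argument (identify $\tau_w|_{X_{fin}}$ with $\sigma(X_{fin},X_{fin}^*)$ --- you additionally justify surjectivity of the restriction map $X^*\to X_{fin}^*$, which the paper leaves implicit --- and invoke the classical weak-compactness characterization of reflexivity for the Banach space $X_{fin}$). Where you genuinely diverge is the statement (5): the paper links it to (2) in one line by citing the general locally convex fact that semi-reflexivity is equivalent to the weak topology having the Heine--Borel property (applied to $(X,\tau_F)$, using that the weak topologies of $(X,\tau)$ and $(X,\tau_F)$ coincide), whereas you link it to (4) directly: for (4)$\Rightarrow$(5) you decompose $(X,\tau_w)$, via Lemma \ref{X is isomorphic to finite subspace and M} and Remark \ref{reflexivity of M}, as $\bigl(X_{fin},\sigma(X_{fin},X_{fin}^*)\bigr)\times\bigl(M,\sigma(M,M')\bigr)$ with $M'$ the algebraic dual, verify Heine--Borel on each factor (reflexive Banach factor via uniform boundedness and weak compactness of balls; discrete factor because sets bounded against the full algebraic dual lie in finite-dimensional subspaces), and observe Heine--Borel passes to finite products; for (5)$\Rightarrow$(4) you check that $B_X$ is weakly bounded and weakly closed, the latter from $\parallel x\parallel=\sup\{|f(x)|:\parallel f\parallel_{op}\le 1\}$ (the same fact the paper extracts from Proposition 4.9 of \cite{nwiv} elsewhere). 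What each route buys: the paper's is much shorter, resting on the cited characterization of semi-reflexive locally convex spaces; yours is more self-contained and elementary, makes explicit why the infinite part $M$ is harmless, and incidentally sidesteps the small mismatch in the paper between the cited semi-reflexivity criterion and the reflexivity asserted in (2), which the paper only resolves implicitly through the remaining equivalences. All steps you give check out, so the extra length is the only cost.
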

\begin{proof}
(2)$\Leftrightarrow$(5). It follows from the fact that a locally convex space is semi-reflexive if and only if its weak topology has the Heine-Borel property (see, Theorem 15.2.4, p. 489 in \cite{tvsnarici}).

(4)$\Leftrightarrow$(3). Note that if $\tau_w$ is the weak topology corresponding to $(X,\parallel\cdot\parallel)$ (or $(X,\tau_F)$), then $\tau_w|_{X_{fin}}$ is the weak topology of the Banach space $(X_{fin}, \parallel\cdot\parallel)$.  Consequently, the equivalence follows from the fact that a Banach space is reflexive if and only if its closed unit ball is weakly compact (see, Exercise 15.101, p. 516 in \cite{tvsnarici}). 

The equivalences (1)$\Leftrightarrow$(2)$\Leftrightarrow$(3) follow from Corollaries \ref{reflexivity of an enls}, \ref{reflexivity of finite subspace in an enls}. \end{proof}

Recall that a normed linear space $X$ is reflexive if and only if $J(B_X)\supseteq B_{X^{**}}$, where $J$ is the canonical map on $X$, and  $B_{X}$ and $B_{X^{**}}$ are the closed unit balls in $X$  and $(X^*, \parallel\cdot\parallel_{op})^*$, respectively. We next prove an analogous result for an enls. 
 
\begin{theorem}\label{reflexive in terms of polar of closed unit ball in the dual}
	Suppose $(X,\parallel\cdot\parallel)$ is an extended normed space and $J$ is the corresponding canonical map on $X$. Then  $X$ is reflexive if and only if  $J(B_X)=\left(B_{X^*}\right)^\circ$, where $B_{X^*}$ is the closed unit ball in $(X^*, \parallel\cdot\parallel_{op})$ and $\left(B_{X^*}\right)^\circ$ is the polar of $B_{X^*}$ in $(X^*, \tau_{ucb})^*$.\end{theorem}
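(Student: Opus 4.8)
The plan is to transport the statement to the finite subspace $(X_{fin},\parallel\cdot\parallel)$, which is a genuine normed space, and there apply the classical reflexivity criterion recalled just before the theorem together with Corollary~\ref{reflexivity of finite subspace in an enls}. In what follows, write $(X_{fin})^{**}=(X_{fin}^*,\parallel\cdot\parallel_{op})^*$ with closed unit ball $B_{(X_{fin})^{**}}$, and let $\widetilde J$ be the canonical map of the normed space $(X_{fin},\parallel\cdot\parallel)$.

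I would begin with two elementary observations. The set $B_X=\{x\in X:\parallel x\parallel\le 1\}$ is bounded in $(X,\parallel\cdot\parallel)$, since it is absorbed by every neighbourhood of $0$; and, directly from the definition of the operator norm, $(B_X)^\circ=\{f\in X^*:\parallel f\parallel_{op}\le 1\}=B_{X^*}$. Hence $B_{X^*}$ is a $\tau_{ucb}$-neighbourhood of $0$ in $X^*$, and therefore the restriction map $R:(X^*,\tau_{ucb})\to(X_{fin}^*,\parallel\cdot\parallel_{op})$, $R(f)=f|_{X_{fin}}$, is continuous: it is linear, and $R^{-1}(B_{X_{fin}^*})=\{f\in X^*:\parallel f|_{X_{fin}}\parallel_{op}\le 1\}=B_{X^*}$ is a neighbourhood of $0$. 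Moreover $R$ is onto, since every continuous linear functional on $X_{fin}$ extends continuously to $X$ (Corollary 4.2 in \cite{flctopology}), and $\ker R=\{f\in X^*:\parallel f\parallel_{op}=0\}$. Consequently the map $T:(X_{fin})^{**}\to(X^*,\tau_{ucb})^*$, $T(\phi)=\phi\circ R$, is a well-defined linear injection.

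The core of the argument is the identity $T\big(B_{(X_{fin})^{**}}\big)=(B_{X^*})^\circ$. The inclusion $\subseteq$ is the estimate $|\phi(f|_{X_{fin}})|\le\parallel\phi\parallel_{op}\,\parallel f|_{X_{fin}}\parallel_{op}\le 1$ for $\parallel\phi\parallel_{op}\le 1$ and $f\in B_{X^*}$. For $\supseteq$, let $\psi\in(B_{X^*})^\circ$. If $\parallel f\parallel_{op}=0$ then $nf\in B_{X^*}$ for every $n\in\mathbb{N}$, so $n|\psi(f)|=|\psi(nf)|\le 1$ and hence $\psi(f)=0$; thus $\psi$ vanishes on $\ker R$ and, as $R$ is onto, factors as $\psi=\phi\circ R=T(\phi)$ for a unique linear functional $\phi$ on $X_{fin}^*$. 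Testing $\phi$ on $g\in X_{fin}^*$ with $\parallel g\parallel_{op}\le 1$ through the extension of $g$ that vanishes on an algebraic complement of $X_{fin}$ — this extension is continuous and has operator norm $\parallel g\parallel_{op}$, hence lies in $B_{X^*}$ — gives $\parallel\phi\parallel_{op}\le 1$, so $\phi\in B_{(X_{fin})^{**}}$. Finally a one-line computation shows $T(\widetilde J_x)=J_x$ for $x\in X_{fin}$, and since $B_{X_{fin}}=B_X\subseteq X_{fin}$ this yields $T\big(\widetilde J(B_{X_{fin}})\big)=J(B_X)$.

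Granting these facts, the theorem follows immediately: since $T$ is injective, $J(B_X)=(B_{X^*})^\circ$ holds if and only if $\widetilde J(B_{X_{fin}})=B_{(X_{fin})^{**}}$, which — in view of the automatic (isometric) inclusion $\widetilde J(B_{X_{fin}})\subseteq B_{(X_{fin})^{**}}$ — is precisely the classical statement that $(X_{fin},\parallel\cdot\parallel)$ is reflexive; and by Corollary~\ref{reflexivity of finite subspace in an enls} this is equivalent to $(X,\parallel\cdot\parallel)$ being reflexive. The step I expect to be the main obstacle is the inclusion $\supseteq$ in $T\big(B_{(X_{fin})^{**}}\big)=(B_{X^*})^\circ$: one must exploit that $B_{X^*}$, although a neighbourhood of $0$, contains the entire ray $\{nf:n\in\mathbb{N}\}$ for each $f$ with $\parallel f\parallel_{op}=0$, which is exactly what forces a functional in $(B_{X^*})^\circ$ to descend through restriction to $X_{fin}$; one should also bear in mind that $\parallel\cdot\parallel_{op}$ is a genuine norm only on $X_{fin}^*$, so that the bidual ball is understood in the ordinary normed-space sense.
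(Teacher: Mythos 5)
Your argument is correct, and its engine is the same as the paper's: both proofs pass to the normed space $(X_{fin},\parallel\cdot\parallel)$ via Corollary~\ref{reflexivity of finite subspace in an enls} and move functionals between $(X^*,\tau_{ucb})^*$ and $X_{fin}^{**}$ by precomposition with the restriction map $f\mapsto f|_{X_{fin}}$ (your $T(\phi)=\phi\circ R$ is exactly the paper's lift $\Psi\mapsto\hat{\Psi}$). The packaging differs in a way worth noting, though. The paper treats the two implications asymmetrically: for ``reflexive $\Rightarrow$ equality'' it takes $\psi\in(B_{X^*})^\circ$, writes $\psi=J_{x_0}$ by surjectivity of $J$, and invokes Proposition 4.9 of \cite{nwiv} to conclude $x_0\in B_X$; for the converse it only needs the inclusion $T\bigl(B_{(X_{fin})^{**}}\bigr)\subseteq(B_{X^*})^\circ$ together with the hypothesis. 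You instead establish the two-sided identity $T\bigl(B_{(X_{fin})^{**}}\bigr)=(B_{X^*})^\circ$, the compatibility $T\bigl(\widetilde J(B_{X_{fin}})\bigr)=J(B_X)$, and injectivity of $T$, thereby reducing the whole theorem in one stroke to the classical ball criterion for $X_{fin}$. The extra cost is your $\supseteq$ step: the observation that any $\psi\in(B_{X^*})^\circ$ annihilates $\{f\in X^*:\parallel f\parallel_{op}=0\}=\ker R$ (because $B_{X^*}$ contains the whole ray $\{nf\}$) and hence descends, via the extension-by-zero computation, to an element of $B_{(X_{fin})^{**}}$; this is sound, using only the facts $\parallel f\parallel_{op}=\parallel f|_{X_{fin}}\parallel_{op}$ and the extension criterion recorded in the preliminaries, and it is precisely what lets you dispense with Proposition 4.9 of \cite{nwiv}. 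What you gain is a symmetric, essentially self-contained argument; what the paper's version buys is a shorter forward implication.
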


\begin{proof}
	It is easy to see that $J(B_X)\subseteq \left(B_{X^*}\right)^\circ$. For the reverse inclusion consider $\psi\in \left(B_{X^*}\right)^\circ$. Since $X$ is reflexive, there is an  $x_0\in X$ such that $\psi=J_{x_0}$. Therefore for any $\phi\in B_{X^*}$, we have $|\phi(x_0)|=|J_{x_0}(\phi)|=|\psi(\phi)|\leq 1$. So by Proposition 4.9 in \cite{nwiv}, $x_0\in B_X$. 
	
	Conversely, suppose $J(B_X)=\left(B_{X^*}\right)^\circ$. By Corollary \ref{reflexivity of finite subspace in an enls}, it is enough to show that $(X_{fin}, \parallel\cdot\parallel)$ is reflexive.  Let $B_{X_{fin}}$ and $B_{X_{fin}^{**}}$ be the closed unit balls in $(X_{fin}, \parallel\cdot\parallel)$ and $(X_{fin}^*, \parallel\cdot\parallel_{op})^*$, respectively. To show $(X_{fin}, \parallel\cdot\parallel)$ is reflexive, it is enough to show that $J_f(B_{X_{fin}})\supseteq B_{X_{fin}^{**}}$, where $J_f$ is the canonical map on $X_{fin}$. Suppose $\Psi\in B_{X_{fin}^{**}}$. Define a linear functional $\hat{\Psi}$ on $X^*$ by $\hat{\Psi}(f)= \Psi(f|_{X_{fin}})$ for every $f\in X^*$. It is easy to see that $|\hat{\Psi}(f)|= |\Psi(f|_{X_{fin}})|\leq \parallel \Psi \parallel_{op} \parallel f\parallel_{op} \leq 1$ for every $f\in B_{X^*}$. Then $\hat{\Psi}$ is continuous on $(X^*, \parallel\cdot\parallel_{op})$. Consequently, $\hat{\Psi}\in(X^*, \tau_{ucb})^*$. Also observe that $\hat{\Psi}\in \left(B_{X^*}\right)^\circ$. So there exists an $x\in B_{X}=B_{X_{fin}}$ such that $J_x=\hat{\Psi}$ as $J(B_X)=(B_{X^*})^\circ$. For any $\phi\in (X_{fin}, \parallel\cdot\parallel_{op})^*$, we have  $\Psi(\phi)=\hat{\Psi}(\phi')=J_x(\phi')=\phi'(x)=\phi(x)$, where $\phi'$ is any continuous linear extension of $\phi$ on $X$. Therefore $\Psi=J_f(x)\in J_f(B_{X_{fin}})$.\end{proof}

%	Note that $X_{fin}^*$ is isometrically isomorphic to the closed subspace $N=\{f\in X^*: f|_M=0\}$ of $(X^*,\tau_{ucb})$ (the map $f\to f'$ for $f\in X_{fin}^*$, is bijective and isometry,) , $g'$ is  a continuous linear functional on $A$ defined by $g'(\phi):=g\left(\phi|_{X_{fin}}\right)$ for $\phi\in A$ and   $|g'(\phi)|\leq \parallel \phi\parallel_{op}$ for $\phi\in A$. By Hahn Banach theorem, we get an extension $\hat{g}\in (X^*,\tau_{ucb})^*$ of $g'$ with $|\hat{g}(\phi)|\leq \parallel\phi\parallel_{op}$ for $\phi\in X^*$. Which implies that $\hat{g}\in (B_{X^*})^\circ$. There exists a $x\in B_X$ with $J_x=\hat{g}$, but $\hat{g}(\phi)=g'(\phi)=g(\phi|_{X_{fin}})$ for  $\phi\in A$. So $J_x=g$ on $X_{fin}^*$. Consequently, $J(B_{X_{fin}})=B_{X_{fin}^{**}}$. Therefore $X_{fin}$ is reflexive. By Corollary \ref{reflexivity of finite subspace in an enls}, $X$ is reflexive.
 
In the next result, we show that the reflexive property in an extended Banach space is a three-space property, that is, if $Y$ is a closed subspace of an enls $(X, \parallel\cdot\parallel)$ and any two of the spaces $(Y, \parallel\cdot\parallel)$, $(X, \parallel\cdot\parallel)$ and $\left(X/Y, \parallel\cdot\parallel_q \right)$ are reflexive, then the third one is also reflexive, where $X/Y=\{x+Y :x\in X\}$ and $\parallel x+Y\parallel_q=\inf\{\parallel x-y\parallel : y\in Y\}$.

 \begin{theorem}\label{three space property}
 	Let $(X,\parallel\cdot\parallel)$ be an enls and let $Y$ be a closed subspace of $X$. Then $(X, \parallel\cdot\parallel)$ is reflexive if and only if both $(Y, \parallel\cdot\parallel)$ and the quotient space $\left(X/Y, \parallel\cdot\parallel_q \right)$ are reflexive.\end{theorem}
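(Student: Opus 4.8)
The plan is to transport the statement to ordinary normed spaces by passing to finite subspaces, and then to invoke the classical three‑space theorem for reflexivity of Banach spaces. The bridge is Corollary~\ref{reflexivity of finite subspace in an enls}, which says that an enls is reflexive exactly when its finite subspace is a reflexive normed space.

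First I would record the relevant identifications of finite subspaces. The extended norm on $Y$ is the restriction of $\parallel\cdot\parallel$, so the finite subspace of the enls $(Y,\parallel\cdot\parallel)$ is $Y_{fin}:=\{y\in Y:\parallel y\parallel<\infty\}=Y\cap X_{fin}$; since $X_{fin}$ carries the subspace topology of $(X,\parallel\cdot\parallel)$ and $Y$ is closed, $Y_{fin}$ is a closed subspace of the normed space $(X_{fin},\parallel\cdot\parallel)$. For the quotient, I would first check that $\parallel\cdot\parallel_q$ is a genuine extended norm: if $\parallel x+Y\parallel_q=0$, then $x$ lies in the closure of $Y$, hence in $Y$ because $Y$ is closed, so $(X/Y,\parallel\cdot\parallel_q)$ is indeed an enls. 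Next, $\parallel x+Y\parallel_q<\infty$ if and only if $\parallel x-y\parallel<\infty$ for some $y\in Y$, that is, if and only if $x\in X_{fin}+Y$; hence $(X/Y)_{fin}=(X_{fin}+Y)/Y$ as a subspace of $X/Y$. Moreover, for $x\in X_{fin}$ every $y\in Y$ with $\parallel x-y\parallel<\infty$ automatically lies in $X_{fin}$, hence in $Y_{fin}$, so that $\parallel x+Y\parallel_q=\inf\{\parallel x-y\parallel:y\in Y_{fin}\}$. Consequently the map $x+Y_{fin}\mapsto x+Y$ is an isometric isomorphism of the normed quotient $X_{fin}/Y_{fin}$ onto $\bigl((X/Y)_{fin},\parallel\cdot\parallel_q\bigr)$.

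With these identifications, Corollary~\ref{reflexivity of finite subspace in an enls} reduces the theorem to a statement about the normed space $X_{fin}$ and its closed subspace $Y_{fin}$: namely, $X_{fin}$ is reflexive if and only if both $Y_{fin}$ and $X_{fin}/Y_{fin}$ are reflexive. For the forward direction, reflexivity of $X_{fin}$ makes it a reflexive Banach space, so its closed subspace $Y_{fin}$ and its quotient $X_{fin}/Y_{fin}$ are reflexive, whence $Y$ and $X/Y$ are reflexive. For the converse, reflexivity of $Y_{fin}$ and of $X_{fin}/Y_{fin}$ forces both of these normed spaces to be Banach; then $X_{fin}$ is complete (recall that a normed space is complete as soon as a closed subspace and the corresponding quotient are complete), and the classical three‑space property for reflexivity of Banach spaces gives that $X_{fin}$ is reflexive, hence so is $(X,\parallel\cdot\parallel)$.

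I expect the only genuine work to be the finite‑subspace bookkeeping in the second paragraph — in particular, confirming that $(X/Y)_{fin}$ is precisely $(X_{fin}+Y)/Y$ and that $\parallel\cdot\parallel_q$ restricts there to the honest quotient norm of $X_{fin}/Y_{fin}$. Once the problem is rephrased for ordinary normed spaces, the rest is a direct appeal to the classical three‑space theorem for reflexivity; if one prefers a self‑contained argument, this classical fact can in turn be obtained from weak compactness of unit balls (as used in Theorem~\ref{reflivity and weak compactness of closed unit ball}) together with the polar characterization of reflexivity in Theorem~\ref{reflexive in terms of polar of closed unit ball in the dual}.
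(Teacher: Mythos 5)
Your proposal is correct and follows essentially the same route as the paper: reduce everything to the finite subspaces via Corollary~\ref{reflexivity of finite subspace in an enls}, identify $\left((X/Y)_{fin},\parallel\cdot\parallel_q\right)$ isometrically with $\left(X_{fin}/Y_{fin},\parallel\cdot\parallel_q\right)$ (the paper cites Beer for the description of $(X/Y)_{fin}$ and does the same norm computation you do), and then invoke the classical three-space property for reflexivity of normed spaces. Your extra remark that reflexivity of $Y_{fin}$ and $X_{fin}/Y_{fin}$ forces completeness of $X_{fin}$ before applying the Banach-space theorem is a small but welcome refinement of a step the paper leaves implicit.
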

 \begin{proof}
 	Suppose $(X,\parallel\cdot\parallel)$ is reflexive. Then $Y_{fin}=\{y\in Y: \parallel y\parallel<\infty \}$ is a closed subspace of a reflexive space $(X_{fin}, \parallel\cdot\parallel)$. By Theorem 15.2.7, p. 490 in \cite{tvsnarici}, $(Y_{fin}, \parallel\cdot\parallel)$ is a reflexive space. Then by Corollary \ref{reflexivity of finite subspace in an enls},  $(Y, \parallel\cdot\parallel)$ is reflexive. Note that the finite subspace $\left(X/Y\right)_{fin}$ of the quotient space $\left(X/Y, \parallel\cdot\parallel_q \right)$ is equal to $\{x+Y: x\in X_{fin}\}$ (see, Theorem 3.21 in \cite{nwiv}). It is easy to see that if $x\in X\setminus X_{fin}$ and $\parallel y\parallel=\infty$ for some $y\in Y$, then $\parallel x-y\parallel =\infty$. So  for every $x\in X_{fin}$, we have  $$\parallel x+Y\parallel_q= \inf\{\parallel x-y\parallel: y\in Y\}= \inf\{\parallel x-y\parallel: y\in Y_{fin}\}= \parallel x+Y_{fin}\parallel_q.$$ 
 	Therefore $\left(\left( X/Y\right)_{fin} , \parallel\cdot\parallel_q \right)$ is isometrically isomorphic to $\left(X_{fin}/Y_{fin}, \parallel\cdot\parallel_q \right)$. Since reflexivity in a normed linear space is a three-space property (see, p. 491 in \cite{tvsnarici}), we have  $\left(X_{fin}/Y_{fin}, \parallel\cdot\parallel_q \right)$ is reflexive. So  $\left(\left( X/Y\right)_{fin}, \parallel\cdot\parallel_q \right)$ is reflexive. By Corollary \ref{reflexivity of finite subspace in an enls},  $\left(X/Y, \parallel\cdot\parallel_q \right)$ is reflexive. 
 	
 	Conversely, suppose both $(Y, \parallel\cdot\parallel)$ and  $\left(X/Y, \parallel\cdot\parallel_q \right)$ are reflexive. By Corollary \ref{reflexivity of finite subspace in an enls}, both $(Y_{fin}, \parallel\cdot\parallel)$ and  $\left((X/Y)_{fin}, \parallel\cdot\parallel_q \right)$ are reflexive. So $(Y_{fin}, \parallel\cdot\parallel)$ and $\left(X_{fin}/Y_{fin}, \parallel\cdot\parallel_q \right)$ are reflexive. Therefore $(X_{fin}, \parallel\cdot\parallel)$ is reflexive. Hence by Corollary \ref{reflexivity of finite subspace in an enls}, $(X, \parallel\cdot\parallel)$ is reflexive. \end{proof}

In general, if $\mu$ is a finitely compatible norm for an enls $(X,\parallel\cdot\parallel)$, that is, both $\mu$ and $\parallel\cdot\parallel$ induce the same topology on $X_{fin}$, then the reflexivity of $(X, \mu)$ may not have any relation with the reflexivity of $(X, \parallel\cdot\parallel)$ (see, Examples \ref{reflexivity doesnot depend on finitely compatible norm 1}, \ref{reflexivity doesnot depend on finitely compatible norm 2}). But, there exists a finitely compatible norm $\nu$ such that $(X, \parallel\cdot\parallel)$ is reflexive whenever $(X, \nu)$ is reflexive.    

\begin{theorem} Suppose $(X,\parallel\cdot\parallel)$ is an extended normed  space with $X=X_{fin}\oplus M$. Then there exists a finitely compatible norm $\nu$ for $(X,\parallel\cdot\parallel)$ with the following property: If $(X,\nu)$ is reflexive, then $(X,\parallel\cdot\parallel)$ is reflexive.\end{theorem}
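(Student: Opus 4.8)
The plan is to build $\nu$ so that $(X,\nu)$ becomes, up to isometric isomorphism, the direct product of the normed space $(X_{fin},\|\cdot\|)$ with a normed complement, and then to recover reflexivity of $(X_{fin},\|\cdot\|)$ from that of $(X,\nu)$ using the classical fact that a closed subspace of a reflexive normed space is reflexive, concluding with Corollary~\ref{reflexivity of finite subspace in an enls}.

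First I would fix a finite-valued norm $\mu$ on the vector space $M$ (one exists --- e.g.\ the $\ell^{1}$-norm relative to a Hamel basis of $M$), and define, for $x\in X$ written in its unique form $x=x_{f}+x_{m}$ with $x_{f}\in X_{fin}$ and $x_{m}\in M$,
$$\nu(x)=\|x_{f}\|+\mu(x_{m}).$$
Since $\|x_{f}\|<\infty$ and $\mu(x_{m})<\infty$, $\nu$ is finite-valued, and verifying that it is a norm on $X$ is routine. For $x\in X_{fin}$ we have $x_{m}=0$, so $\nu(x)=\|x\|$; hence $\nu$ and $\|\cdot\|$ agree on $X_{fin}$ and in particular induce the same topology there, i.e.\ $\nu$ is a finitely compatible norm for $(X,\|\cdot\|)$. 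Next I would observe that the linear bijection $\Psi\colon(X,\nu)\to(X_{fin},\|\cdot\|)\times(M,\mu)$ sending $x_{f}+x_{m}$ to $(x_{f},x_{m})$ carries $\nu$ to the $\ell^{1}$-product norm, so it is an isometric isomorphism; consequently $(X_{fin},\|\cdot\|)$ is isometrically a closed subspace of $(X,\nu)$ --- it is the $\Psi$-preimage of the closed set $X_{fin}\times\{0\}$, equivalently the kernel of the ($\nu$-continuous) projection of $X$ onto $M$ along $X_{fin}$.

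Finally, assuming $(X,\nu)$ is reflexive, I would note that since $\nu$ is a finite-valued norm the finite subspace of the elcs $(X,\nu)$ is all of $X$, so the bounded subsets of $(X,\nu)$ are exactly the $\nu$-bounded sets, the topology $\tau_{ucb}$ on $X^{*}$ is the operator-norm topology, $(X^{*},\tau_{ucb})^{*}$ is the usual bidual, and --- the canonical map being continuous (Proposition~\ref{continuity of the canonical map}) and an isometry --- reflexivity of the elcs $(X,\nu)$ coincides with classical reflexivity of the normed space $(X,\nu)$. By Theorem 15.2.7, p.~490 in \cite{tvsnarici}, its closed subspace $(X_{fin},\|\cdot\|)$ is then reflexive, and Corollary~\ref{reflexivity of finite subspace in an enls} gives that $(X,\|\cdot\|)$ is reflexive. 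I do not anticipate a real obstacle; the only points needing care are that $\nu$ is indeed a norm restricting to $\|\cdot\|$ on $X_{fin}$ (immediate from the definition) and the identification of elcs-reflexivity of the finite norm $\nu$ with its classical reflexivity, which is what makes the standard subspace theorem applicable.
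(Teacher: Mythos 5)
Your proof is correct and is essentially the paper's argument: both construct a finite, finitely compatible norm of the form $\nu(x)=\parallel x_f\parallel+\sigma(x_m)$ for $x=x_f+x_m$, observe that $X_{fin}$ is a closed subspace of the normed space $(X,\nu)$ on which $\nu$ agrees with $\parallel\cdot\parallel$, invoke the classical fact that a closed subspace of a reflexive normed space is reflexive to obtain reflexivity of $(X_{fin},\parallel\cdot\parallel)$, and conclude with Corollary \ref{reflexivity of finite subspace in an enls}. The only difference is the choice of the summand on $M$: the paper uses $\sigma=|\phi|$ for a continuous linear functional $\phi$ with $\phi^{-1}(0)=X_{fin}$ (a choice that literally requires $M$ to be at most one-dimensional, since the kernel of a single functional has codimension one), whereas your arbitrary finite norm $\mu$ on $M$, together with the $\nu$-continuity of the projection onto $M$ along $X_{fin}$, accomplishes the same thing for a complement of any dimension, so your variant is if anything the more robust one.
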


\begin{proof} Let $\phi\in X^*$ such that $\phi^{-1}(0)=X_{fin}$. Consider the norm $\nu$ on $X$ by $\nu(x)=\parallel x_f\parallel + |\phi(x_M)|$, where $x=x_f+x_M$ with $x_f\in X_{fin}$ and $x_M\in M$. Then $\nu$  is a finitely compatible norm for $(X, \parallel\cdot\parallel)$ and $\phi\in (X,\nu)^*.$ As $(X,\nu)$ is reflexive and $\phi\in (X,\nu)^*$ with  $\phi^{-1}(0)=X_{fin}$,   $(X_{fin},\parallel\cdot\parallel)$ is reflexive. By Corollary \ref{reflexivity of finite subspace in an enls}, $(X,\parallel\cdot\parallel)$ is reflexive.\end{proof}

\begin{example}\label{reflexivity doesnot depend on finitely compatible norm 1}
	Let  $c_{00}$ be the collection of all eventually zero sequences with the discrete extended norm $\parallel\cdot\parallel_{0,\infty}$. Then $(c_{00},\parallel\cdot\parallel_{0,\infty})$ is a reflexive space but the finitely compatible normed space $(c_{00},\parallel\cdot\parallel_{\infty})$ is not reflexive.\end{example} 

\begin{example}\label{reflexivity doesnot depend on finitely compatible norm 2}
	Let $l_p ~(1<p<\infty)$ be the space of all $p$-summable real sequences. Suppose $M$ is  a subspace of $l_{p}$ with $l_{p}=c_{00}\oplus M$. Define an extended norm on $l_p$ by 
	\[ \parallel x\parallel =
	\begin{cases}
	\text{$\parallel x\parallel_{p},$} &\quad\text{if $x\in c_{00} $}\\
	\text{$\infty$,} &\quad\text{otherwise. }\\
	\end{cases}\]
	Then $X_{fin}=c_{00}$ is not reflexive. Consequently, $(l_{p}, \parallel\cdot\parallel)$ is a non-reflexive extended normed linear space. Note that the classical $p$-norm $\parallel\cdot\parallel_{p}$ is a finitely compatible norm on $(X, \parallel\cdot\parallel)$. Also, the normed space $(l_p,\parallel\cdot\parallel_{p})$ is a reflexive space.\end{example}

\section{Applications to Function spaces}
Let $(X, d)$ be a metric space and let $C(X)$ be the set of all real-valued continuous functions on $(X,d)$. By a \textit{bornology} $\mathcal{B}$ on $(X, d)$, we mean a collection of nonempty subsets of $X$ that covers $X$ and is closed under finite union and taking subsets of its members. A subfamily $\mathcal{B}_0$ of $\mathcal{B}$ is a \textit{base} for $\mathcal{B}$ if it is cofinal in $\mathcal{B}$ under the set inclusion. In addition, if every member of $\mathcal{B}_0$ is closed in $(X,d)$, then we say $\mathcal{B}$ has a \textit{closed base}. For more details about metric bornologies, we refer to \cite{bala}.

In this section, we study the reflexivity of the function spaces $(C(X), \tau_{\mathcal{B}}^s)$ and $(C(X), \tau_{\mathcal{B}})$, where $\tau_{\mathcal{B}}^s (\tau_{\mathcal{B}})$ is the topology of strong uniform convergence (uniform convergence) on the elements of $\mathcal{B}$. We first define these topologies. 
%
%A collection $\mathcal{B}$ of non-empty subsets of a metric space $(X, d)$ is said to be a \textit{bornology} if it covers $X$ and is closed under finite union and taking subsets of its members. A subfamily $\mathcal{B}_0$ of $\mathcal{B}$ is a \textit{base} for $\mathcal{B}$ if it is cofinal in $\mathcal{B}$ under the set inclusion. In addition, if every member of $\mathcal{B}_0$ is closed in $(X,d)$, then we say $\mathcal{B}$ has a \textit{closed base}.
%
%Suppose $\mathcal{B}$ is a bornology on a metric space $(X,d)$. Then the most commonly studied topologies on the collection $C(X)$ of all real-valued continuous functions on $X$ are the topology of uniform convergence on $\mathcal{B}$, usually denoted by $\tau_{\mathcal{B}}$, and the topology of strong uniform convergence on $\mathcal{B}$, denoted by $\tau_{\mathcal{B}}^s$.
% 
% 
 \begin{definition}{\normalfont (\cite{Suc})} \normalfont Let $\mathcal{B}$ be a bornology on a metric space $(X, d)$. Then \textit{the topology $\tau_{\mathcal{B}}$ of  uniform convergence on $\mathcal{B}$} is determined by a uniformity on $C(X)$ having base consisting of sets of the form  $$[B, \epsilon]=\left\lbrace (f, g) : \forall x\in B,~ |f(x)-g(x)|<\epsilon \right\rbrace~ (B\in\mathcal{B},~ \epsilon>0). $$  \end{definition}

 \begin{definition}{\normalfont (\cite{Suc})} \normalfont Let $\mathcal{B}$ be a bornology on a metric space $(X, d)$. Then \textit{the topology $\tau^{s}_{\mathcal{B}}$ of strong uniform convergence on $\mathcal{B}$} is determined by a uniformity on $C(X)$ having base consisting of sets of the form  $$[B, \epsilon]^s=\left\lbrace (f, g) : \exists~ \delta>0 ~\forall x\in B^\delta,~ |f(x)-g(x)|<\epsilon \right\rbrace~ (B\in\mathcal{B},~ \epsilon>0),$$ where for $B\subseteq X$, $B^\delta=\displaystyle{\bigcup_{y\in B}}\{x\in X: d(x, y)<\delta\}$.  	 \end{definition}
 
Suppose $\mathcal{B}$ is a bornology on a metric space $(X, d)$. Then the topology $\tau_{\mathcal{B}}$ on $C(X)$ is induced by the collection $\mathcal{P}=\{\rho_{B}: B\in\mathcal{B}\}$ of extended seminorms, where $\rho_{B}(f)=\sup_{x\in B}|f(x)|$ for $f\in C(X)$. Similarly, the topology $\tau_{\mathcal{B}}^s$ on $C(X)$ is induced by the collection $\mathcal{P}=\{\rho_{B}^s: B\in\mathcal{B}\}$ of extended seminorms, where $\rho_B^s(f) =\inf_{\delta>0}\left\lbrace \sup_{x\in B^\delta}|f(x)|\right\rbrace$ for $f\in C(X)$. Hence both the function spaces $(C(X), \tau_{\mathcal{B}})$ and $(C(X), \tau_{\mathcal{B}}^s)$ are actually extended locally convex spaces. For more details related to $\tau_{\mathcal{B}}$ and $\tau_{\mathcal{B}}^s$, we refer to \cite{Suc, ATasucob, SWSUCB}.  
  
%We use the next result to study the reflexivity of the function spaces $(C(X), \tau_{\mathcal{B}})$ and $(C(X), \tau_{\mathcal{B}}^s)$.
%
%\begin{definition}{\normalfont (\cite{doelcs})} \normalfont	Let $(X,\tau)$ be an elcs with the dual $X^*$. The \textit{weak topology} $\tau_w$ on $X$ is induced by the collection $\mathcal{P}_w=\{\rho_f :f\in X^*\}$ of seminorms, where $\rho_f(x)=|f(x)|$ for all $x \in X$.
%	
%Similarly, the \textit{weak$^*$ topology} $\tau_{w^*}$ on $X^*$ is induced by the collection $\mathcal{P}_{w^*}=\{\rho_x :x\in X\}$ of seminorms, where $\rho_x(f)=|f(x)|$ for all $f \in X^*$.\end{definition}

Recall that if $(X, \tau)$ is a locally convex space with the dual $X^*$, then the \textit{Mackey topology} $\tau_M$ is a locally convex space topology on $X$ whose neighborhood base at $0$ is given by $$\mathcal{B}_M=\{B_\circ: B \text{ is an absolutely convex and weak$^*$ compact subset of } X^*\}.$$
We say $(X, \tau)$ is a \textit{Mackey space} if $\tau=\tau_M$. Note that $\tau_M$ is the largest locally convex topology on $X$ such that $(X,\tau_M)^*=X^*$. For more details about Mackey topology, we refer to \cite{tvsnarici, lcsosborne}.    

\begin{proposition}\label{mackey topology for weak* topology} Suppose $\tau_{w^*}$ and $\tau_w$ are the weak$^*$ and weak topologies for an elcs $(X, \tau)$, respectively. Then $$\mathcal{B}=\{B^\circ: B \text{ is absolutely convex and compact in } (X, \tau_w)\}$$ is a neighborhood base at $0$ in $(X^*, \tau_M)$, where $\tau_M$ is the Mackey topology for the locally convex space $(X^*, \tau_{w^*})$. 	     \end{proposition}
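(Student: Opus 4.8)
The plan is to unwind the two layers of duality that are present in the statement. The Mackey topology $\tau_M$ appearing there is that of the locally convex space $(X^*,\tau_{w^*})$, whose topological dual is $E:=\{J_x:x\in X\}=J(X)$, where $J$ is the canonical map; indeed each $J_x$ is $\tau_{w^*}$-continuous since $|J_x|$ is one of the seminorms $\rho_x$ defining $\tau_{w^*}$, and equality $(X^*,\tau_{w^*})^*=E$ is the usual description of the dual of a space carrying a weak topology (see, e.g., \cite{tvsnarici}), using that $E$ separates the points of $X^*$. Since $\tau_{w^*}$ is exactly the topology $\sigma(X^*,E)$ of the pair $\langle X^*,E\rangle$, the description of a Mackey neighborhood base recalled just above the proposition applies to $(X^*,\tau_{w^*})$ and yields that
\[
\mathcal{B}_M=\{K_\circ:K\subseteq E\text{ absolutely convex and }\sigma(E,X^*)\text{-compact}\}
\]
is a neighborhood base at $0$ in $(X^*,\tau_M)$, where $K_\circ=\{f\in X^*:|\Phi(f)|\le 1\text{ for all }\Phi\in K\}$. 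So the whole task is to show that, transported through $J$, the family $\mathcal{B}_M$ is precisely $\mathcal{B}$.

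The key structural observation is that $J$ implements a linear homeomorphism of $(X,\tau_w)$ onto $(E,\sigma(E,X^*))$. First, $J$ is injective, i.e. $X^*$ separates the points of $X$: given $x\neq 0$, choose a $\tau$-continuous extended seminorm $\rho$ with $\rho(x)>0$; writing $X=X_{fin}^{\rho}\oplus N$ and applying Hahn–Banach on the clopen subspace $X_{fin}^{\rho}$ (where $\rho$ is a genuine seminorm), together with an arbitrary linear functional on $N$, one produces an $f$ on $X$ with $|f|\le\rho$ — hence $f\in X^*$ — and $f(x)\neq 0$. Second, $\sigma(E,X^*)$ is generated by the maps $\Phi\mapsto\Phi(f)$, $f\in X^*$, and pre-composing with $J$ gives the seminorms $x\mapsto|J_x(f)|=|f(x)|$ that define $\tau_w$; thus $\tau_w$ is the initial topology on $X$ induced by $J\colon X\to(E,\sigma(E,X^*))$, and since $J$ is bijective onto $E$ it is a homeomorphism.

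With this in hand the proof finishes by direct transport. The assignments $K\mapsto J^{-1}(K)$ and $B\mapsto J(B)$ are mutually inverse bijections between the absolutely convex $\sigma(E,X^*)$-compact subsets $K$ of $E$ and the absolutely convex $\tau_w$-compact subsets $B$ of $X$: linearity of $J$ preserves absolute convexity in both directions, and a homeomorphism carries compact sets to compact sets both ways. Finally, for $B=J^{-1}(K)$ one has (using injectivity of $J$)
\[
K_\circ=\{f\in X^*:|J_x(f)|\le 1\text{ for all }x\in B\}=\{f\in X^*:|f(x)|\le 1\text{ for all }x\in B\}=B^{\circ}.
\]
Hence $\mathcal{B}_M=\mathcal{B}$, and therefore $\mathcal{B}$ is a neighborhood base at $0$ in $(X^*,\tau_M)$.

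The step I expect to be the main obstacle is the bookkeeping of the two superposed dual pairs — correctly identifying the dual of $(X^*,\tau_{w^*})$ with $E=\{J_x:x\in X\}$ and recognizing that the ``weak-$*$'' topology $\sigma(E,X^*)$ occurring in the Mackey base of $(X^*,\tau_{w^*})$ is exactly the $J$-image of the weak topology $\tau_w$ on $X$; once that is set up, everything else is routine. The only genuinely non-formal input inside it is the injectivity of $J$, i.e. that $X^*$ separates points of $X$, which is dispatched by the Hahn–Banach argument above.
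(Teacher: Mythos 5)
Your proposal is correct and takes essentially the same route as the paper's proof: identify $(X^*,\tau_{w^*})^*$ with $J(X)$, note that $J$ carries $(X,\tau_w)$ homeomorphically onto $(J(X),\sigma(J(X),X^*))$ so that absolutely convex compact sets correspond, and check that the polars $K_\circ$ and $B^\circ$ match. You merely make explicit two points the paper leaves implicit (the injectivity of $J$ via Hahn--Banach on $X_{fin}^\rho$, and both directions of the correspondence), so the substance is the same.
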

\begin{proof} Let $D$ be an absolutely convex and weak$^*$ compact subset of  $(X^*, \tau_{w^*})^*$. Then $D=\{J_x:x\in A\}$ for some $A\subseteq X$ as $(X^*, \tau_{w^*})^*=\{J_x:x\in X\}$. Since $D$ is absolutely convex, $A$ is absolutely convex. Note that a net $(x_\lambda)$ in $X$ converges weakly to $x$ if and only if $f(x_\lambda)\to f(x)$ for every $f\in X^*$ if and only if $J_{x_\lambda}(f)\to J_x(f)$ for all $f\in X^*$. Therefore $A$ is weakly compact. It is easy to prove that $A^\circ=D_\circ$. Which completes the proof.   
\end{proof}

\begin{theorem}\label{reflexive iff every bdd set is weakly cpt} Suppose $(X, \tau)$ is an elcs. Then $(X, \tau)$ is semi-reflexive if and only if every bounded subsets of $(X, \tau)$ is relatively weakly compact.\end{theorem}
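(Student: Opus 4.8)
The plan is to compute both sides through the canonical map. Write $\Xi:=(X^*,\tau_{ucb})^*$ and let $\tau_w$ be the weak topology of $(X,\tau)$; recall from \cite{doelcs} that $X^*$ separates points of $X$, so $J$ is injective, and that, since $\tau_{w^*}\subseteq\tau_{ucb}$ and each $J_x$ is $\tau_{ucb}$-continuous, a net in $X$ converges weakly iff its image under $J$ converges for $\sigma(\Xi,X^*)$. Thus $J$ is a linear homeomorphism of $(X,\tau_w)$ onto $J(X)$ equipped with $\sigma(\Xi,X^*)|_{J(X)}$, and $(X,\tau)$ is semi-reflexive precisely when $J(X)=\Xi$. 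I will also use repeatedly that $\{B^{\circ}:B\text{ bounded in }(X,\tau)\}$ is a neighbourhood base at $0$ for $\tau_{ucb}$, and that $B^{\circ}=(\mathrm{Cl}_{\tau_w}(B))^{\circ}$ because a functional is bounded by $1$ on $B$ exactly when it is so on $\mathrm{Cl}_{\tau_w}(B)$.

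For the forward implication, I would assume $J(X)=\Xi$ and fix a bounded $B\subseteq X$. Then $B^{\circ}$ is a $\tau_{ucb}$-neighbourhood of $0$, so by the Alaoglu--Bourbaki theorem its polar in $\Xi$ is $\sigma(\Xi,X^*)$-compact; using $J(X)=\Xi$ this polar equals $J\bigl((B^{\circ})_\circ\bigr)$, where $(B^\circ)_\circ$ is the polar of $B^\circ$ in $X$. Carrying this across the homeomorphism $J$, the set $(B^{\circ})_\circ$ is $\tau_w$-compact, and by the bipolar theorem in $\langle X,X^*\rangle$ it is the $\tau_w$-closed absolutely convex hull of $B$; in particular $\mathrm{Cl}_{\tau_w}(B)$ is a $\tau_w$-closed subset of it and hence $\tau_w$-compact, i.e. $B$ is relatively weakly compact.

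For the converse I would assume every bounded subset of $(X,\tau)$ is relatively weakly compact and take $\psi\in\Xi$. By $\tau_{ucb}$-continuity there is a bounded $B$ with $\psi$ lying in the polar of $B^{\circ}$ in $\Xi$; put $K:=\mathrm{Cl}_{\tau_w}(B)$, which is $\tau_w$-compact, and note $B^{\circ}=K^{\circ}$. The bipolar theorem in the dual pair $\langle\Xi,X^*\rangle$ identifies the polar of $B^{\circ}=K^{\circ}$ in $\Xi$ with the $\sigma(\Xi,X^*)$-closed absolutely convex hull of $J(K)$, since the polar of $J(K)$ in $X^*$ is $K^{\circ}$. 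Hence, if the $\tau_w$-closed absolutely convex hull $\widehat K$ of $K$ in $X$ is $\tau_w$-compact, then $J(\widehat K)$ is $\sigma(\Xi,X^*)$-compact, hence closed, and since it contains $J(\mathrm{ab}(K))=\mathrm{ab}(J(K))$ it contains the closure of the latter, which is exactly the polar of $B^{\circ}$ in $\Xi$; so $\psi\in J(\widehat K)\subseteq J(X)$, i.e. $\psi=J_x$ for some $x$, and $(X,\tau)$ is semi-reflexive.

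So everything reduces to the one non-formal point — which I expect to be the \emph{main obstacle} — that the $\tau_w$-closed absolutely convex hull of the weakly compact set $\mathrm{Cl}_{\tau_w}(B)$ (with $B$ bounded in $(X,\tau)$) is again weakly compact. This is a Krein-type statement, false in a general locally convex space without some completeness; note that $\widehat K$ is at least bounded in the flc space $(X,\tau_F)$, since every $\tau$-bounded set is $\tau_F$-bounded. I would settle it by reducing to the finite subspace exactly as in Lemma~\ref{X is isomorphic to finite subspace and M} and Theorem~\ref{reflexivity of X with finite subspaces}: choosing a continuous extended seminorm $\rho$ and a complement $M$ with $X=X_{fin}^{\rho}\oplus M$, on $M$ the subspace topology is discrete, so its bounded sets and their absolutely convex hulls are finite-dimensional and hence weakly compact; on $X_{fin}^{\rho}$ the topology is locally convex, the hypothesis descends to it, and the classical equivalence ``semi-reflexive $\iff$ every bounded set is relatively weakly compact'' applies directly (closed absolutely convex hulls of bounded sets being bounded in a locally convex space); finally one glues the two factors through the product decomposition of $(X,\tau_F)$, as in those results, recovering weak compactness of $\widehat K$ and therefore $J(X)=\Xi$.
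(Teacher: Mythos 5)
Your forward direction is correct and is, in substance, the paper's own argument in different clothing: the paper deduces $\tau_{ucb}\subseteq\tau_M$ from semi-reflexivity via Mackey--Arens and then uses Proposition~\ref{mackey topology for weak* topology} together with the bipolar theorem, while you apply Alaoglu--Bourbaki to the polar of $B^{\circ}$ in $(X^*,\tau_{ucb})^*$ and pull it back through $J$; these are the same circle of ideas and both work.

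The problem is in the converse, exactly at the point you flag, and your proposed repair does not close it. Your reduction rests on the claim that $\left(X_{fin}^{\rho},\tau|_{X_{fin}^{\rho}}\right)$ is locally convex, so that the classical fact ``the closed absolutely convex hull of a bounded set is bounded'' can be applied there. This is false in general: $X_{fin}^{\rho}$ is the finiteness space of the single extended seminorm $\rho$, and other continuous extended seminorms may still take the value $\infty$ on it, so $\left(X_{fin}^{\rho},\tau|_{X_{fin}^{\rho}}\right)$ is again an elcs (a clopen subspace), not a locally convex space. Only the full finite subspace $X_{fin}$ is locally convex, and $X_{fin}=X_{fin}^{\rho}$ for some continuous $\rho$ precisely when the elcs is fundamental. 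Consequently your decomposition $X=X_{fin}^{\rho}\oplus M$ settles the Krein-type lemma (and hence the converse) only for fundamental elcs; for a general elcs the factor $X_{fin}^{\rho}$ is of the same kind as $X$ and the recursion never reaches a space where the classical equivalence applies. Note also that the shortcut you want really is unavailable in the extended setting: for the discrete extended norm the bounded sets are exactly the finite sets, whose absolutely convex hulls are not bounded, so ``hull of bounded is bounded'' genuinely fails, even though in that example the hulls happen to be weakly compact for finite-dimensionality reasons. For comparison, the paper's converse is phrased through the Mackey topology: relative weak compactness of bounded sets is asserted to give $\tau_{ucb}\subseteq\tau_M$, whence $\tau_{w^*}\subseteq\tau_{ucb}\subseteq\tau_M$ and Mackey--Arens yield $(X^*,\tau_{ucb})^*=\{J_x:x\in X\}$; unwinding Proposition~\ref{mackey topology for weak* topology}, the substantive content there is again that every bounded set is contained in an absolutely convex weakly compact set --- so you isolated the right crux, but your justification of it must be replaced by an argument that works for non-fundamental elcs rather than by an appeal to local convexity of $X_{fin}^{\rho}$.
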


\begin{proof} Suppose $(X, \tau)$ is semi-reflexive. Then $(X^*, \tau_{ucb})^*=(X^*, \tau_{w^*})^*=\{J_x:x\in X\}$. Therefore $\tau_{ucb}\subseteq \tau_M$, where $\tau_M$ is the Mackey topology for $(X^*, \tau_{w^*})$. Now, let $A$ be any bounded set in $(X, \tau)$. Then $A^\circ$ is a neighborhood of $0$ in $(X^*, \tau_{ucb})$. Consequently, $A^\circ$ is a neighborhood of $0$ in $(X^*, \tau_M)$. By Proposition \ref{mackey topology for weak* topology}, there exists an absolutely convex and weakly compact subset $B$ of $X$ such that $B^\circ\subseteq A^\circ$. Therefore $A\subseteq (A^\circ)_\circ\subseteq (B^\circ)_\circ$. By applying Bipolar Theorem on $B$ in $(X, \tau_w)$, we obtain $(B^\circ)_\circ=B$. Hence $A$ is relatively weakly compact.

Conversely, suppose every bounded set is weakly compact in $(X, \tau)$. Then $\tau_{ucb}\subseteq \tau_M$. Therefore $(X^*, \tau_{ucb})^*=(X^*, \tau_M)^*=(X^*, \tau_{w^*})^*=\{J_x:x\in X\}$. Hence $(X, \tau)$ is semi-reflexive.  	         
\end{proof}

\begin{theorem}\label{reflexivity of tauB implies K is subset of B} Let $\mathcal{B}$ be a bornology with a closed base on a metric space $(X, d)$. If $(C(X), \tau_{\mathcal{B}}^s)$ $(\text{or } (C(X), \tau_{\mathcal{B}}))$ is reflexive, then $\mathcal{K}\subseteq \mathcal{B}$.	\end{theorem}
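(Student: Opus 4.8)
The plan is to argue the contrapositive: assuming $\mathcal{K}\not\subseteq\mathcal{B}$, I will exhibit a bounded subset of $(C(X),\tau_{\mathcal{B}}^s)$ (resp. $(C(X),\tau_{\mathcal{B}})$) that is not relatively weakly compact, and then conclude. Since a reflexive elcs is in particular semi-reflexive, and since Theorem~\ref{reflexive iff every bdd set is weakly cpt} tells us that an elcs is semi-reflexive exactly when every bounded set is relatively weakly compact, producing one bounded, non-relatively-weakly-compact set is enough to contradict reflexivity.

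The construction is as follows. Since finite subsets of $X$ belong to $\mathcal{B}$ and $\mathcal{B}$ is closed under subsets, choose $K\in\mathcal{K}\setminus\mathcal{B}$; replacing $K$ by its closure in $(X,d)$ (which is still in $\mathcal{K}$ and, being a superset of $K$, still not in $\mathcal{B}$) we may assume $K$ is compact, and then $K$ must be infinite. An infinite compact metric space has a point $z$ that is not isolated in $K$; since every neighbourhood of $z$ meets $K\setminus\{z\}\subseteq X\setminus\{z\}$, the point $z$ is not isolated in $X$ either, so $\mathbf{1}_{\{z\}}\notin C(X)$. Now set $g_n(x)=\max\{0,\,1-n\,d(x,z)\}$. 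Each $g_n$ is continuous, $0\le g_n\le 1$, and $g_n(x)\to\mathbf{1}_{\{z\}}(x)$ for every $x\in X$. This is the only place the hypothesis $K\in\mathcal{K}\setminus\mathcal{B}$ is used, namely to manufacture the non-isolated point $z$.

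Let $A=\{g_n:n\in\mathbb{N}\}$. First, $A$ is bounded: for every $B\in\mathcal{B}$ we have $\rho_B^s(g_n)\le\sup_{x\in X}|g_n(x)|\le 1$, so given any basic neighbourhood $U=\bigcap_{i=1}^{k}\{f:\rho_{B_i}^s(f)<\varepsilon_i\}$ of $0$, every $g_n$ lies in $rU$ as soon as $r>\max_i\varepsilon_i^{-1}$; hence $A\subseteq\{0\}+rU$, which is boundedness in the sense of Definition~\ref{bounded set in an elcs} (the identical computation with $\rho_B$ handles $\tau_{\mathcal{B}}$). Second, $A$ is not relatively weakly compact: for each $x\in X$ the singleton $\{x\}$ lies in $\mathcal{B}$ and $|f(x)|=\rho_{\{x\}}^s(f)$ for $f\in C(X)$, so the evaluation $\delta_x$ belongs to $(C(X),\tau_{\mathcal{B}}^s)^*$ and is $\tau_w$-continuous. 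If $A$ were relatively weakly compact, its $\tau_w$-closure $W$ in $C(X)$ would be $\tau_w$-compact, so the net $(g_n)_{n}$ would have a $\tau_w$-cluster point $h\in W\subseteq C(X)$; applying $\delta_x$ and using that the real sequence $(g_n(x))_n$ converges to $\mathbf{1}_{\{z\}}(x)$ forces $h(x)=\mathbf{1}_{\{z\}}(x)$ for every $x$, i.e. $h=\mathbf{1}_{\{z\}}\notin C(X)$, a contradiction. By Theorem~\ref{reflexive iff every bdd set is weakly cpt}, $(C(X),\tau_{\mathcal{B}}^s)$ (resp. $(C(X),\tau_{\mathcal{B}})$) is then not semi-reflexive, hence not reflexive, contradicting the hypothesis; therefore $\mathcal{K}\subseteq\mathcal{B}$.

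The points that need care are the two places where the extended setting could interfere. One is verifying that $A$ is bounded in the exact sense of Definition~\ref{bounded set in an elcs} rather than merely "absorbed by every neighbourhood of $0$" (the two notions differ for a genuine elcs); this works precisely because each $\rho_B^s$ is finite and uniformly bounded by $1$ on $A$. The other is the passage from relative weak compactness of $A$ to the existence of a weak cluster point of $(g_n)$ lying inside $C(X)$, and then pinning that cluster point down to $\mathbf{1}_{\{z\}}$ through the continuous evaluations $\delta_x$. A small but essential design choice is to use the singleton $\{z\}$ at a non-isolated point rather than attempting the argument with $\mathbf{1}_K$ directly, since the compact set $K$ might be clopen (making $\mathbf{1}_K$ continuous), whereas $\{z\}$ is never clopen when $z$ fails to be isolated.
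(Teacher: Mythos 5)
Your proof is correct, but it takes a genuinely different route from the paper. The paper disposes of this theorem in one line: by Proposition \ref{reflexive of an elcs imply barrelledness of finest space}, reflexivity of $(C(X), \tau_{\mathcal{B}}^s)$ (or $(C(X),\tau_{\mathcal{B}})$) forces the associated finest locally convex space to be barreled, and then it invokes the external characterizations (Theorems 3.15 and 4.3 of \cite{barreledelcs}) of barreledness of these function spaces, which require the closed-base hypothesis and give $\mathcal{K}\subseteq\mathcal{B}$. You instead argue the contrapositive directly through semi-reflexivity: a compact set outside $\mathcal{B}$ must be infinite (finite sets always lie in a bornology), hence contains a point $z$ non-isolated in $X$, and the uniformly bounded sequence $g_n(x)=\max\{0,1-nd(x,z)\}$ converging pointwise to $\mathbf{1}_{\{z\}}\notin C(X)$ yields, via the continuous evaluations $\delta_x$ (singletons belong to $\mathcal{B}$), a bounded set with no weak cluster point in $C(X)$; Theorem \ref{reflexive iff every bdd set is weakly cpt} then kills semi-reflexivity, hence reflexivity. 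Your verification of boundedness in the precise sense of Definition \ref{bounded set in an elcs} and the pinning-down of the cluster point are both sound, and the argument is in spirit the same construction the paper uses one theorem later to prove Theorem \ref{semi-reflexivity on tauBs}. What your approach buys is self-containedness (no appeal to \cite{barreledelcs}) and, notably, it never uses the closed-base assumption; what the paper's route buys is brevity and the stronger structural fact (barreledness of the finest space) as an intermediate step. Two cosmetic remarks: members of $\mathcal{K}$ are already compact, so the passage to the closure is vacuous, and you only need $|f(x)|\le\rho^s_{\{x\}}(f)$ rather than equality (though equality does hold by continuity of $f$).
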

\begin{proof} It follows from Theorems 3.15, 4.3 in \cite{barreledelcs} and  Proposition \ref{reflexive of an elcs imply barrelledness of finest space}.
\end{proof}

\begin{theorem}\label{semi-reflexivity on tauBs} Let $\mathcal{B}$ be a bornology with a closed base on a metric space $(X, d)$. If $(C(X), \tau_{\mathcal{B}}^s)$ $(\text{or } (C(X), \tau_{\mathcal{B}}))$ is reflexive, then $X$ is a discrete space.	\end{theorem}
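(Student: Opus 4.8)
The idea is to combine the previous theorem, which already tells us that reflexivity of $(C(X),\tau_{\mathcal{B}}^s)$ (or $(C(X),\tau_{\mathcal{B}})$) forces $\mathcal{K}\subseteq\mathcal{B}$, with the general criterion from Theorem \ref{reflexive iff every bdd set is weakly cpt}: a semi-reflexive elcs has the property that every bounded set is relatively weakly compact. So the strategy is to produce, assuming $X$ is \emph{not} discrete, a bounded subset of $(C(X),\tau_{\mathcal{B}}^s)$ (respectively $(C(X),\tau_{\mathcal{B}})$) that fails to be relatively weakly compact, thereby contradicting reflexivity.

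First I would fix a non-isolated point $x_0\in X$ and a sequence $(x_n)$ of distinct points with $x_n\to x_0$, $x_n\neq x_0$. Using Urysohn-type functions on the metric space $(X,d)$, I would construct a sequence $(f_n)$ in $C(X)$ with disjoint (shrinking) supports concentrated near the $x_n$, normalized so that $0\le f_n\le 1$ and $f_n(x_n)=1$, with the supports chosen small enough (radius $\to 0$) that on any fixed $B\in\mathcal{B}$ only finitely many $f_n$ are nonzero — here one uses that members of $\mathcal{B}$ are (contained in) sets whose closure is in $\mathcal{B}$, and since $\mathcal{K}\subseteq\mathcal{B}$ we know in particular $\{x_n:n\in\mathbb{N}\}\cup\{x_0\}\in\mathcal{B}$, but more importantly any $B\in\mathcal{B}$ cannot contain infinitely many of the $x_n$ together with all of a neighborhood of each, once the neighborhoods shrink fast enough. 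This makes the set $A=\{f_n:n\in\mathbb{N}\}$ bounded in $\tau_{\mathcal{B}}$, and with the strong-uniformity version one arranges the support radii to beat the $\delta$-enlargements so that $A$ is also bounded in $\tau_{\mathcal{B}}^s$.

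Next I would show $A$ is not relatively weakly compact. The natural route is to exhibit a continuous linear functional, or a weak-topology argument, separating the $f_n$: since the $f_n$ have disjoint supports, for any subsequence the evaluation functionals $\delta_{x_{n_k}}\in C(X)^*$ (point evaluations are continuous since $\{x_{n_k}\}\in\mathcal{K}\subseteq\mathcal{B}$) satisfy $\delta_{x_{n_k}}(f_{n_j})=\delta_{jk}$, so the sequence $(f_n)$ behaves like the unit vector basis of $c_0$ or $\ell^1$ inside $C(X)$; in particular it has no weakly convergent subnet converging to an element of $C(X)$ — any weak cluster point $g$ would have to satisfy $g(x_n)=\lim$ along a subnet, but evaluating against $\delta_{x_0}$ and against suitable finitely-supported combinations of the $\delta_{x_n}$ forces incompatible constraints (a weak cluster point would need $g(x_0)$ to be both $0$ and a limit of values $1$, exploiting $x_n\to x_0$ and continuity of $g$). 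This contradicts Theorem \ref{reflexive iff every bdd set is weakly cpt}.

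The main obstacle I anticipate is the simultaneous bookkeeping needed to make $A$ bounded in \emph{both} topologies while keeping the $f_n$ weakly separated: in $\tau_{\mathcal{B}}^s$ one must control the functions on $\delta$-enlargements $B^\delta$ of bornology members, so the supports of the $f_n$ must shrink faster than any fixed $\delta$-collar around a given $B$ would reach them — this is exactly where $\mathcal{K}\subseteq\mathcal{B}$ (hence relative compactness/total boundedness of the relevant $B$'s, or at least the fact that no $B\in\mathcal{B}$ has interior accumulating at too many $x_n$) is used, and I would likely need to invoke the closed-base hypothesis together with the metric structure to carry this out cleanly. Once boundedness in $\tau_{\mathcal{B}}^s$ is established, boundedness in the coarser $\tau_{\mathcal{B}}$ is automatic, so the two cases in the statement are handled uniformly.
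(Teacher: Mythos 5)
Your overall strategy is the same as the paper's (use Theorem \ref{reflexivity of tauB implies K is subset of B} plus Theorem \ref{reflexive iff every bdd set is weakly cpt}, and exhibit a bounded set in $(C(X),\tau_{\mathcal{B}}^s)$ that is not relatively weakly compact), but your specific construction does not deliver the contradiction. With disjointly supported bumps satisfying $f_n(x_n)=1$ and $f_n(x_0)=0$, the zero function is a perfectly consistent candidate cluster point: for any finite set of points, all but finitely many $f_n$ vanish on it, so no point evaluations (nor finite combinations of them) can separate the set $A=\{f_n\}$ from $0$. Your claim that a weak cluster point $g$ ``would need $g(x_0)$ to be both $0$ and a limit of values $1$'' is false for this $A$, because every $f_n$ vanishes at $x_0$, and a cluster point is in no way obliged to track the values $f_n(x_n)=1$ attained at the moving points $x_n$. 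Worse, such a family can genuinely be relatively weakly compact: for $X=\{0\}\cup\{1/n:n\in\mathbb{N}\}$ with $\mathcal{B}$ all subsets, $(C(X),\tau_{\mathcal{B}}^s)$ is $c$ with the sup norm and your bumps are the unit vectors $e_n$, which converge weakly to $0$. (Your own analogy with the unit vector basis of $c_0$ points the same way: that basis is weakly null, so ``behaving like it'' does not preclude relative weak compactness.) So the second half of your plan, as written, would fail.

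The repair is to build the functions so that continuity of a cluster point is itself violated, which is what the paper does: take $(x_n)$ a sequence of distinct points converging to a non-isolated point $x_0$ and choose $f_n\in C(X,[0,1])$ with $f_n(x_j)=0$ for $1\le j\le n$ and $f_n(x_0)=1$. Boundedness of $A=\{f_n\}$ in $\tau_{\mathcal{B}}^s$ (hence in $\tau_{\mathcal{B}}$) is then immediate from the uniform bound $0\le f_n\le 1$, so all of your support-shrinking bookkeeping against the $\delta$-enlargements $B^\delta$ is unnecessary. If $A$ were relatively weakly compact it would have a weak cluster point $f\in C(X)$; since point evaluations are $\tau_{\mathcal{B}}^s$-continuous, $f$ is a pointwise cluster point, forcing $f(x_m)=0$ for every $m$ (use the $f_n$ with $n>m$) while $|f(x_0)-f_n(x_0)|<\tfrac12$ for some $n$ gives $|f(x_0)|\ge\tfrac12$; this contradicts continuity of $f$ because $x_m\to x_0$. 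This is exactly the step your construction cannot reproduce, since in your setup the natural cluster point $f=0$ is continuous and causes no conflict.
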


\begin{proof} Suppose $K$ is any compact set in $(X, d)$. Then by Theorem \ref{reflexivity of tauB implies K is subset of B}, $K\in \mathcal{B}$.  We show that $K$ is a finite set. Let $(x_n)$ be any sequence in $K$ converging to $x_0$. For every $n\in\mathbb{N}$, there exists an $f_n\in C(X, [0, 1])$ such that $f_n(x_j)=0$ for $1\leq j\leq n$ and $f_n(x_0)=1$. Consider $A=\{f_n: n\in\mathbb{N}\}$. Clearly, $A$ is bounded in $(C(X), \tau_{\mathcal{B}}^s)$. By Theorem \ref{reflexive iff every bdd set is weakly cpt}, $A$ is relatively weakly compact in $(C(X), \tau_{\mathcal{B}}^s)$. Therefore there exists a cluster point $f$ of $A$. It is easy to prove that the topology of pointwise convergence is coarser than the weak topology for $(C(X), \tau_{\mathcal{B}}^s)$ as the  maps $\Psi_x:\left( C(X), \tau_{\mathcal{B}}^s\right) \to \mathbb{R}$ defined by $\Psi_x(f)=f(x)$ is continuous for every $x\in X$. For every $m\in \mathbb{N}$ and $\epsilon >0$, there exists an $n>m$ such that $|f(x_m)-f_n(x_m)|<\epsilon$. Therefore $f(x_m)=0$ for every $m\in \mathbb{N}$. Consequently, $f(x_m)\to f(x_0)=0$. But, there also exists an $n\in \mathbb{N}$ such that $|f(x_0)-f_n(x_0)|<\frac{1}{2}$. We arrive at a contradiction. Hence $X$ is a discrete space.       
\end{proof}

%
%\begin{corollary}\label{reflexivity on tauBs} Let $\mathcal{B}$ be a bornology with a closed base on a metric space $(X, d)$. If $(C(X), \tau_{\mathcal{B}}^s)$ $(\text{or } (C(X), \tau_{\mathcal{B}}))$ is reflexive, then $X$ is a discrete space.	\end{corollary}

Suppose $(X,d)$ is a metric space. Then the topology $\tau_u$ of uniform convergence on $C(X)$ is induced by the extended norm $\parallel f\parallel_\infty=\sup_{x\in X}|f(x)|$. It is known that for a compact space $(X,d)$, the normed space $(C(X), \parallel\cdot\parallel_\infty)$ is reflexive if and only if $X$ is finite (Example 15.5.2, p. 502 in \cite{tvsnarici}).  We now prove a similar result without assuming $(X,d)$ to be compact. The next theorem also shows that the converse of Theorem \ref{semi-reflexivity on tauBs} may not be true. 

\begin{theorem} Suppose $(X,d)$ is a metric space. Then the uniform space $(C(X), \parallel\cdot\parallel_\infty)$ is reflexive if and only if $X$ is finite.\end{theorem}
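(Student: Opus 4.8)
The plan is to reduce the whole statement to two facts that are already available: Corollary~\ref{reflexivity of finite subspace in an enls}, which says an enls is reflexive exactly when its finite subspace is reflexive, and Theorem~\ref{semi-reflexivity on tauBs}, which forces $X$ to be discrete as soon as a suitable function space is reflexive; after that, only the textbook non-reflexivity of $\ell_\infty$ is needed.

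First I would dispose of the trivial implication and fix notation. The extended norm $\parallel\cdot\parallel_\infty$ on $C(X)$ has finite subspace $(C(X))_{fin}=\{f\in C(X):\sup_{x\in X}|f(x)|<\infty\}$, which is the Banach space $C_b(X)$ of bounded continuous functions equipped with the supremum norm. If $X$ is finite, then $C(X)=C_b(X)=\mathbb{K}^{X}$ is finite dimensional, hence reflexive as a normed space, and Corollary~\ref{reflexivity of finite subspace in an enls} gives that $(C(X),\parallel\cdot\parallel_\infty)$ is reflexive.

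For the converse, suppose $(C(X),\parallel\cdot\parallel_\infty)$ is reflexive. The key observation is that $\tau_u$ is a special case of the topology $\tau_{\mathcal B}$ of uniform convergence on a bornology: since $\parallel f\parallel_\infty=\sup_{x\in X}|f(x)|=\rho_X(f)$, the topology $\tau_u$ is induced by the single extended seminorm $\rho_X$, and this is precisely $\tau_{\mathcal B}$ for the largest bornology $\mathcal B=\{A\subseteq X:A\neq\emptyset\}$, which has the closed base $\{X\}$. Hence Theorem~\ref{semi-reflexivity on tauBs} applies and yields that $X$ is a discrete metric space. When $X$ is discrete, every scalar function on $X$ is continuous, so $C(X)=\mathbb{K}^{X}$ and $(C(X))_{fin}=\ell_\infty(X)$; by Corollary~\ref{reflexivity of finite subspace in an enls}, $\ell_\infty(X)$ is reflexive. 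If $X$ were infinite, fix a countably infinite subset $D\subseteq X$; the functions in $\ell_\infty(X)$ vanishing on $X\setminus D$ form a closed subspace (an intersection of kernels of bounded evaluation functionals) isometrically isomorphic to $\ell_\infty(D)\cong\ell_\infty$, which is not reflexive, contradicting the fact that a closed subspace of a reflexive space is reflexive (Theorem 15.2.7, p.\ 490 in \cite{tvsnarici}). Therefore $X$ is finite.

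The only substantive step is the passage to discreteness, and there the entire weight is carried by Theorem~\ref{semi-reflexivity on tauBs} (which itself rests on Theorem~\ref{reflexive iff every bdd set is weakly cpt} together with the barreledness characterization of reflexivity); everything after that is the classical non-reflexivity of $\ell_\infty$ fed through Corollary~\ref{reflexivity of finite subspace in an enls}. If one wishes a proof that does not invoke Theorem~\ref{semi-reflexivity on tauBs}, the alternative is to work directly inside the Banach space $C_b(X)$: an infinite metric space $X$ contains a sequence of distinct points with pairwise disjoint open balls (if $X$ has an infinite closed discrete subset this is immediate; otherwise choose $x_n\to x_0$ with $d(x_{n+1},x_0)<\tfrac13 d(x_n,x_0)$ and take balls of radius $\tfrac13 d(x_n,x_0)$), and the continuous bump functions supported in these balls have closed linear span isometrically isomorphic to $c_0$ inside $C_b(X)$, again contradicting reflexivity. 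I expect the main thing to be careful about is precisely the identification of $\tau_u$ as $\tau_{\mathcal B}$ for a bornology with a closed base, so that Theorem~\ref{semi-reflexivity on tauBs} is legitimately applicable.
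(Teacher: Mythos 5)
Your argument is correct, but it takes a genuinely different route from the paper's in the converse direction. The paper does not use Theorem \ref{semi-reflexivity on tauBs} at all: it first invokes the classical compact case (Example 15.5.2 in \cite{tvsnarici}) to conclude that an infinite $X$ cannot be compact, extracts a countable closed discrete set $T$, forms the closed subspace $Y$ of functions vanishing on $T$, uses the three-space property (Theorem \ref{three space property}) to get reflexivity of the quotient $C(X)/Y$, and then embeds $l_\infty$ isometrically into $C(X)/Y$ via truncated Tietze extensions. You instead observe that $\tau_u=\tau_{\mathcal B}$ for the maximal bornology $\mathcal B$ of all nonempty subsets of $X$ (which indeed has the closed base $\{X\}$, and reflexivity is a purely topological property of the elcs, so the identification with $(C(X),\parallel\cdot\parallel_\infty)$ is legitimate), apply Theorem \ref{semi-reflexivity on tauBs} to force $X$ to be discrete, and then work entirely inside the finite subspace: $C(X)_{fin}=\ell_\infty(X)$ is reflexive by Corollary \ref{reflexivity of finite subspace in an enls}, yet for infinite $X$ it contains a closed isometric copy of $l_\infty$, contradicting the hereditariness of reflexivity for closed subspaces (Theorem 15.2.7 in \cite{tvsnarici}). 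There is no circularity, since Theorem \ref{semi-reflexivity on tauBs} rests only on Theorems \ref{reflexivity of tauB implies K is subset of B} and \ref{reflexive iff every bdd set is weakly cpt}. Your route is shorter and avoids quotients, Tietze extensions and the three-space theorem, at the cost of leaning on the weak-compactness machinery behind Theorem \ref{semi-reflexivity on tauBs}; the paper's proof is self-contained relative to that section (note the paper explicitly presents this theorem as showing the converse of Theorem \ref{semi-reflexivity on tauBs} fails, which may be why it avoids invoking it) and showcases Theorem \ref{three space property}. Your alternative sketch via disjoint balls and a $c_0$-copy in $C_b(X)$ is also sound and would bypass Theorem \ref{semi-reflexivity on tauBs} entirely; one minor slip either way is writing $\mathbb{K}^X$ where the paper's $C(X)$ is real-valued, which is immaterial.
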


\begin{proof}If $X$ is finite, then  $C(X)$ is finite dimensional. Therefore $(C(X), \parallel\cdot\parallel_\infty)$ is reflexive. Conversely, suppose $(C(X), \parallel\cdot\parallel_\infty)$ is reflexive and $X$ is infinite. Then by Example 15.5.2, p. 502 in \cite{tvsnarici}, $X$ cannot be compact. So there exists a countable, closed and discrete subset $T=\{t_n:n\in\mathbb{N}\}$ of $X$. If $Y=\{f\in C(X): f=0~ \text{on}~ T\}$, then $Y$ is a closed subspace of $C(X)$. By  Theorem \ref{three space property}, $C(X)/Y$ is reflexive. Now, we  show that the space $l_\infty$ of all bounded real sequences is isometrically isomorphic to a subspace of $C(X)/Y$. Let $z=(z_n)\in l_\infty$ and $m=\displaystyle{\inf_{n\in\mathbb{N}}}  z_n$ and $M=\displaystyle{\sup_{n\in\mathbb{N}}}  z_n$. Since $T$ is discrete and closed, by Tietze extension theorem, there exists a $f_z\in C(X)$ such that $f_z(t_n)=z_n$ for $n\in\mathbb{N}$. Define $F_z(x) = \max\{m, \min\{M, f_z(x)\}\}$ for $x\in X$. Then $F_x\in C(X)$ with $F_x(t_n)=x_n$ for $n\in\mathbb{N}$. Consider $\Psi: l_\infty\to C(X)/Y$ by $\Psi(z)=F_z+Y$. Then $\Psi$ is linear as $F_{\alpha x +y} = \alpha F_x+F_y ~\text{on}~ Y$ for $x, y \in l_\infty$ and $\alpha\in\mathbb{R}$. Note that if $z=(z_n)\in l_\infty$ and $f\in Y$, then $\parallel F_z+Y\parallel \leq \parallel F_z\parallel_\infty\leq \max \{|m|, |M|\} = \parallel z\parallel_\infty$ and $\parallel z\parallel_\infty = \displaystyle{\sup_{n\in\mathbb{N}}} |z_n| = \displaystyle{\sup_{n\in\mathbb{N}}}|F_z(t_n)| = \displaystyle{\sup_{n\in\mathbb{N}}}|F_z(t_n)-f(t_n)| \leq \parallel F_z-f\parallel_\infty$. Therefore $\Psi$ is an isometry. By Theorem \ref{three space property}, $\Psi(l_\infty)$ is reflexive. Consequently, by Exercise 3.61, p. 97 in \cite{faaidg}, $l_\infty$ is reflexive. Which is not true.     \end{proof}

\bibliographystyle{plain}
\bibliography{reference_file}		

\end{document}